\documentclass{scrartcl}  
\usepackage[utf8]{inputenc}

\usepackage{amssymb}
\usepackage{amsthm}
\usepackage{amsmath}

\usepackage[colorinlistoftodos,prependcaption]{todonotes}

\usepackage[pdfauthor={},
	colorlinks=true,
	linkcolor=black,
	citecolor=black,
	urlcolor=black,
	filecolor=black,
	pdftitle={Draft},
	pdftex]{hyperref}

\usepackage{comment}

\usepackage{mathtools}

\usepackage{hyperref}
\usepackage[nameinlink]{cleveref}

\usepackage[all]{xy} 
\usepackage{tikz, times}
\usepackage{tkz-graph}
\usetikzlibrary{mindmap,backgrounds}
\tikzstyle{vertex}=[circle, draw, inner sep=0pt, minimum size=6pt]

\DeclareMathOperator{\GL}{GL}

\newtheorem{thm}{Theorem} 
\newtheorem{thmintro}{Theorem}

\newtheorem{prop}{Proposition} 
\newtheorem{cor}[prop]{Corollary} 
\newtheorem{lem}[prop]{Lemma} 

\theoremstyle{definition}

\newtheorem{ex}[prop]{Example}

\newcommand{\TT}{\mathbb{T}}

\newcommand{\RR}{\mathbb{R}}
\newcommand{\ZZ}{\mathbb{Z}}
\newcommand{\NN}{\mathbb{N}}

\newcommand{\HHH}{\mathcal{H}}
\newcommand{\bpm}{\begin{pmatrix}}
\newcommand{\epm}{\end{pmatrix}}

\newcommand{\sigmahat}{\hat{\sigma}}

\DeclareMathOperator{\SL}{SL}
\DeclareMathOperator{\pr}{pr}
\DeclareMathOperator{\sh}{sh}

\def\aff{\mathrm{Aff}}

\newcommand{\OriSquare}[5]
{
\begin{xy}
(0,0)="Pos";
"Pos"+(1,0) **@{-}; ?*h!U!/^2pt/{\text{\scriptsize #5}}, 
"Pos"+(1,1) **@{-}; ?*h!L!/^1pt/{\text{\scriptsize #2}},
"Pos"+(0,1) **@{-}; ?*h!D!/^1pt/{\text{\scriptsize #3}},
"Pos" **@{-}; ?*h!R!/^1pt/{\text{\scriptsize #4}},
"Pos"+(0.5,0.5) *h{\text{\scriptsize #1}};
\end{xy}
}

\title{Totally non congruence Veech groups}

\author{Jan-Christoph Schlage-Puchta, Gabriela Weitze-Schmithüsen\footnote{This work was supported by Project I.8 of SFB-TRR 195 'Symbolic Tools in Mathematics and their Application'}}

\setlength{\parindent}{0pt}

\begin{document}

\maketitle

\begin{abstract}
Veech groups are discrete subgroups of $\SL(2,\RR)$ which 
play an important role in the theory of translation surfaces.
For a special class of translation surfaces called origamis 
or square-tiled surfaces
their Veech groups are subgroups of finite index of $\SL(2,\ZZ)$.
We show that each stratum of the space of translation surfaces
contains infinitely many origamis whose Veech group is a totally 
non congruence group, i.e. it surjects to $\SL(2,\ZZ/n\ZZ)$
for any $n$.

\end{abstract}

\section{Introduction}
Within the last thirty years the study  of {\em translation surfaces} has 
become an active field in mathematics. Their moduli spaces come equipped 
with a natural action of $\SL(2,\RR)$.
It is  one of the  principal goals in this domain  to understand the orbits of this action.
This study culminated in the famous breakthrough result of Eskin, Mirzakhani and Mohammadi, namely  
the so-called {\em magic wand theorem}
 (cf. \cite{eskin+mirzakhani+mohammadi2015,eskin+mirzakhani}).
The {\em Veech group} $\Gamma(X,\mu)$ associated to a translation surface $(X,\mu)$
plays a crucial role in this topic. $\Gamma(X,\mu)$ is the stabiliser of $(X,\mu)$ under the action of 
$\SL(2,\RR)$. It turns out to be a  discrete subgroup of $\SL(2,\RR)$ and it 
carries a lot of information about the dynamical flow on the translation surface and about the 
Teichm\"uller flow defined by $(X,\mu)$. 
{\em Origamis} or {\em square-tiled surfaces}
are a particularly important class of translation surfaces. These surfaces
are tessellated by finitely many Euclidean unit squares. 
Their Veech groups
are especially easy to handle. They are subgroups of finite index of $\SL(2,\ZZ)$ 
and can be calculated explicitly from the combinatorial data which define the origami.
Furthermore, the set of origamis is dense in the moduli space of translation surfaces.
The action of $\SL(2,\RR)$ on the set of translation 
surfaces restricts to an action of $\SL(2,\ZZ)$ on origamis.\\
It is still open whether all subgroups of $\SL(2,\ZZ)$ of finite index occur as Veech groups
of origamis. A major result in this direction was achieved in \cite{ellenberg+mcreynolds2012} where it is proved
that all subgroups of finite index (satisfying a slight condition) of the principal congruence group 
$\Gamma(2)$ occur as Veech groups, where $\Gamma(2)$ is the group of matrices which 
are congruent to the identity matrix modulo 2. 
As a result in some sense in the opposite direction it is shown in \cite{Sc1} that all congruence groups
(cf. below) of prime level except five exceptions occur as Veech groups.\\
It is particularly interesting to study Veech groups of origamis that lie in the same  fixed stratum,
i.e. we fix the genus and the cone angles of the singularities (see below). 
\cite{hubert+lelievre2006, McM1} succeeded to give a complete classification of the $\SL(2,\ZZ)$-orbits 
of origamis in the stratum $\HHH_2(2)$ of translation surfaces of genus $2$ with one singularity of
angle $6\pi$. In this case, the set of origamis with $d$ squares decomposes depending on $d$
in one or two orbits.
There are only a few further classification results for certain subloci of strata 
(cf. \cite{lanneau+nguyen2014, lanneau+nguyenx2014,lanneau+nguyen2018}).
For general strata the classification problem is open.
However, there exists a conjecture for a precise description of the orbits in each stratum by Delecroix and Leli{\`e}vre
based on computer experiments.\\
A {\em congruence subgroup} $\Gamma$ of $\SL(2,\ZZ)$ is a subgroup which is fully determined by
its image in $\SL(2,\ZZ/n\ZZ)$ for some $n \in \NN$, i.e. it is the preimage of its image in
$\SL(2,\ZZ/n\ZZ)$ under the canonical projection $\SL(2,\ZZ) \to \SL(2,\ZZ/n\ZZ)$.
It turns out that such groups are rare among all finite index subgroups of $\SL(2,\ZZ)$.
Turning to Veech groups of origamis: there are several families of origamis
whose Veech groups could be explicitly determined as congruence groups in \cite{ExamplesVeechGroups,comb, Herrlich2006}.
In \cite{ExamplesNoncongruenceGroups} first examples of Veech groups that are non congruence groups
were detected. Hubert and Leli\`evre proved in \cite{HL2} that for all but  one 
origamis of genus 2 with one singularity their Veech group is a non congruence group. \\
For an arbitrary subgroup $\Gamma$ of $\SL(2,\ZZ)$ of finite index
we may measure how much information we loose, if
we consider all its images in the finite quotient groups $\SL(2,\ZZ/n\ZZ)$. In particular, all information
is lost if for all $n$ the image is the full group $\SL(2,\ZZ/n\ZZ)$. 
In this case, we call $\Gamma$ a {\em totally non congruence group}.
In \cite{deficiency} a criterion is given which detects totally noncongruence groups
(cf. \cite[Theorem~2]{deficiency}). It was further shown that in the stratum $\HHH_2(2)$
all Veech groups of origamis are totally non congruence groups or almost totally non congruence groups
(cf. \cite[Theorem 3]{deficiency}). Finally, it was shown that for each stratum $\HHH_{k+1}(2k)$ 
of translation surfaces with only one singularity of cone angle $(k+1)2\pi$ there are infinitely many
origamis whose Veech group is a totally non congruence group (cf. \cite[Theorem 4]{deficiency}).\\
In this article we generalise this statement to {\textbf all} strata. For this we first improve
the criterion for totally non congruence groups from \cite[Theorem 2]{deficiency} and  get
the following very handy conditions which assure that  a group $\Gamma$ is a totally non congruence group.

\begin{thmintro}
\setcounterref{thmintro}{new-criterion}
  Let $\Gamma$ be a finite index subgroup of $\SL(2,\ZZ)$. Denote
  $e_1 = {1 \choose 0}$. Suppose that for
  each prime $p$ there exist matrices $A_1, A_2 \in \SL(2,\ZZ)$ 
  with the following properties:
  \begin{itemize}
  \item[A)]  
    $\forall\; j \in \NN: A_1e_1 \neq j\cdot A_2e_1$ modulo $p$.
  \item[B)] 
    There exist $m_1,m_2 \in \NN$ with
    \[A_1T^{m_1}A_1^{-1} \mbox{ and } A_2T^{m_2}A_2^{-1} \mbox{ are contained in } \Gamma,\]
    such that 
    p divides neither $m_1$ nor $m_2$.
  \end{itemize}
  Then $\Gamma$ is a totally non congruence group.
\end{thmintro}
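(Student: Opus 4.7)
My plan is to work one prime at a time. It suffices to show $\Gamma$ surjects onto $\SL(2,\ZZ/p^k\ZZ)$ for every prime power $p^k$: the groups $\SL(2,\ZZ/p^k\ZZ)$ for distinct primes share no non-trivial common quotient (non-abelian composition factors live over a single prime, and the only abelian quotients arise at $p=2,3$ and are $\ZZ/2\ZZ$ and $\ZZ/3\ZZ$ respectively, so cannot match across different primes), hence Goursat's lemma promotes prime-power surjectivity to surjectivity onto every $\SL(2,\ZZ/n\ZZ)\cong\prod_p\SL(2,\ZZ/p^{v_p(n)}\ZZ)$.

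Fix a prime $p$ and let $A_1,A_2,m_1,m_2$ be as in the hypothesis. Set $U_i := A_iT^{m_i}A_i^{-1}\in\Gamma$ and $N_i := A_iE_{12}A_i^{-1}$ (with $E_{12}$ the standard matrix unit), so that $U_i = I + m_iN_i$ where $N_i$ is a rank-one nilpotent with kernel and image both equal to the line $L_i := \langle A_ie_1\rangle$. The hypothesis $p\nmid m_i$ makes the reduction $\bar U_i\in\SL(2,\ZZ/p\ZZ)$ a non-trivial unipotent with fixed line $L_i$, and condition A is precisely the statement $L_1\neq L_2$ modulo $p$.

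I would first show $\langle\bar U_1,\bar U_2\rangle = \SL(2,\ZZ/p\ZZ)$. Each $\bar U_i$ has order $p$ and therefore generates the whole root subgroup fixing $L_i$. Conjugating $\bar U_1$ by the $p$ powers of $\bar U_2$ produces non-trivial unipotents fixing the $p$ distinct lines $\bar U_2^{k}(L_1)$; together with $L_2$ these exhaust $\mathbb{P}^1(\ZZ/p\ZZ)$, so conjugating by an element of $\SL(2,\ZZ/p\ZZ)$ sending two such lines to the coordinate axes recovers a standard generating pair of $\SL(2,\ZZ)$. For the lift to $\SL(2,\ZZ/p^k\ZZ)$ I induct on $k$. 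For $j\geq 1$ the kernel of $\SL(2,\ZZ/p^{j+1}\ZZ)\to\SL(2,\ZZ/p^j\ZZ)$ identifies with $\mathfrak{sl}_2(\ZZ/p\ZZ)$ via $X\mapsto I+p^jX$, and the conjugation action on this kernel factors through the adjoint action of $\SL(2,\ZZ/p\ZZ)$. The nilpotency of $N_i$ gives $U_i^{p^j} = I + p^jm_iN_i$. Assuming inductively that $\Gamma$ surjects onto $\SL(2,\ZZ/p^j\ZZ)$, the intersection of the image of $\Gamma$ in $\SL(2,\ZZ/p^{j+1}\ZZ)$ with this kernel is an $\SL(2,\ZZ/p\ZZ)$-submodule of $\mathfrak{sl}_2(\ZZ/p\ZZ)$ containing the non-zero nilpotents $m_1N_1$ and $m_2N_2$. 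Since any non-zero nilpotent generates all of $\mathfrak{sl}_2(\ZZ/p\ZZ)$ as an adjoint module, this submodule is everything and the induction closes.

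The step requiring the most attention is this final module-theoretic fact. For odd $p$ it is immediate from the irreducibility of the adjoint representation. The case $p=2$ is the subtle point: there $\mathfrak{sl}_2(\ZZ/2\ZZ)$ fails to be irreducible because the scalar line is invariant, so one has to verify by explicit calculation (facilitated by the isomorphism $\SL(2,\ZZ/2\ZZ)\cong S_3$) that each of the three non-trivial nilpotents lies outside every proper adjoint submodule.
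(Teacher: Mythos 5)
Your argument is correct, but it takes a genuinely different route from the paper's. The paper never invokes Goursat's lemma or any knowledge of the quotients of $\SL(2,\ZZ/p^k\ZZ)$: after splitting $\SL(2,\ZZ/n\ZZ)$ by the Chinese remainder theorem, it uses a B\'ezout trick --- writing $1 = ap^{r} + bm_1m_2n_2$ and raising $A_iT^{m_i}A_i^{-1}$ to the exponent $K = bm_1m_2n_2$ --- to produce elements of $\pr_n(\Gamma)$ which equal $A_iTA_i^{-1}$ in the $p$-block and the identity in every other block; your Goursat reduction achieves the same decoupling, at the price of needing the classification of simple quotients of the groups $\SL(2,\ZZ/p^k\ZZ)$. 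At a fixed prime power the paper again argues in one shot: a conjugation identity (if $Ae_1 = Be_1$ in $\GL(2,\ZZ/n\ZZ)$ then $ATA^{-1} = BT^{\det(B)/\det(A)}B^{-1}$) lets it conjugate the pair $A_1TA_1^{-1}$, $A_2TA_2^{-1}$ by one explicit matrix $B \in \GL(2,\ZZ/p^r\ZZ)$ into $T$ and $T'^{-1}$, giving generation of $\SL(2,\ZZ/p^r\ZZ)$ for all $r$ simultaneously, with no lifting induction. You instead prove generation mod $p$ (your projective-line argument is fine, though overbuilt: the two distinct fixed lines $L_1 \neq L_2$ already suffice, without passing through the orbit of size $p$) and then climb the tower by the Frattini-type argument with kernel $\mathfrak{sl}_2(\ZZ/p\ZZ)$, using $U_i^{p^j} = I + p^jm_iN_i$ from $N_i^2 = 0$. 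This costs you representation theory --- irreducibility of the adjoint module for odd $p$, and the hand check at $p=2$, where your flagged claim is indeed true: $\mathfrak{sl}_2(\ZZ/2\ZZ)$ has exactly two proper nonzero submodules (the scalar line and a complementary invariant plane), and none of the three nonzero nilpotents lies in either --- but it buys a conceptual, strong-approximation-style argument that would transfer to settings where no explicit conjugating matrix is available. Both you and the paper quote without proof that $T$ and $T'$ generate the relevant finite $\SL_2$.

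Two loose statements in your reduction step should be repaired, though neither affects correctness. First, the abelian quotients of $\SL(2,\ZZ/2^k\ZZ)$ are not only $\ZZ/2\ZZ$: for instance $\SL(2,\ZZ/4\ZZ)$ has abelianization $\ZZ/4\ZZ$. Second, the composition factors do not ``live over a single prime'': the centre $\{\pm I\}$ contributes a factor $\ZZ/2\ZZ$ for every odd $p$. What your Goursat argument actually requires --- and what is true --- is the statement about \emph{simple quotients}: these are exactly $\mathrm{PSL}(2,p)$ for $p \geq 5$ (the congruence kernel is a $p$-group, hence maps trivially to a nonabelian simple quotient), $\ZZ/2\ZZ$ for $p = 2$, and $\ZZ/3\ZZ$ for $p = 3$ (the groups being solvable by Burnside in these two cases), and these are pairwise non-isomorphic across distinct primes, so no nontrivial common quotient exists.
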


We then describe a method to construct one-cylinder origamis in each stratum
for which we have a good control over the cylinder decompositions in horizontal and
vertical direction and in the diagonal direction given by the vector $v = {1 \choose 1}$.
Choosing special elements of this family we finally prove the following theorem.
\begin{thmintro}
\setcounterref{thmintro}{thm:in_all_strata}
Every stratum contains an infinite family of origamis
whose Veech groups are totally non-congruence groups.
\end{thmintro}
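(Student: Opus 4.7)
The plan is to feed the explicit one-cylinder origamis announced in the paragraph preceding the theorem into the criterion of Theorem~1. Fix a stratum $\HHH(k_1,\ldots,k_r)$ and let $X$ be a member of the announced family; by construction we have good control over the cylinder decompositions of $X$ in the three directions $e_1$, $e_2$ and $e_1+e_2$. For such an $X$ I would extract three integers $m_h, m_v, m_d \in \NN$ with
\[ T^{m_h} \in \Gamma(X), \qquad ST^{m_v}S^{-1} \in \Gamma(X), \qquad UT^{m_d}U^{-1} \in \Gamma(X), \]
where $S = \bpm 0 & -1 \\ 1 & 0 \epm$ and $U = \bpm 1 & 0 \\ 1 & 1 \epm$. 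Concretely, $m_h$ is the combinatorial length of the unique horizontal cylinder (i.e.\ the number of squares of $X$), while $m_v$ and $m_d$ are the least common multiples of the quantities $\ell_i/\gcd(\ell_i,h_i)$ taken over the cylinders of the respective decompositions; all three integers are accessible from the proposed construction.

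With these three exponents in hand, I would test Theorem~1 on each of the three possible pairs $(A_1,A_2) \in \{(I,S),(I,U),(S,U)\}$. Condition~A) is automatic in each case, at every prime $p$: the three vectors $Ie_1 = \bpm 1 \\ 0 \epm$, $Se_1 = \bpm 0 \\ 1 \epm$ and $Ue_1 = \bpm 1 \\ 1 \epm$ are pairwise $\ZZ/p\ZZ$-linearly independent, since the three associated $2\times 2$ determinants all equal $\pm 1$. So at every prime $p$ it is only condition~B) that requires attention.

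It remains to arrange, for every $p$, a choice of pair $(A_1,A_2)$ for which both relevant exponents can be taken coprime to $p$. Since any positive multiple of $m_h$, $m_v$ or $m_d$ still produces an element of $\Gamma(X)$ of the required form, factors coprime to $p$ can be absorbed freely; the genuine constraint at $p$ is whether two of the three numbers $m_h, m_v, m_d$ are themselves coprime to $p$. Consequently, condition~B) can be satisfied at every prime simultaneously if and only if every prime $p$ divides at most one of $m_h, m_v, m_d$, i.e.\ if and only if the triple $(m_h, m_v, m_d)$ is \emph{pairwise coprime}.

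The theorem is therefore reduced to producing, in every stratum, infinitely many origamis in the announced family for which $(m_h, m_v, m_d)$ is pairwise coprime. This is where the main effort should lie: while $m_h$ is simply the number of squares and hence easy to prescribe, obtaining a usable expression for $m_d$ requires following the squares of $X$ across the slanted direction and describing how they reassemble into diagonal cylinders, uniformly across the strata $\HHH(k_1,\ldots,k_r)$. Once such closed formulas are in hand, pairwise coprimality can be enforced by restricting the combinatorial parameters of the construction to suitable congruence classes, yielding the desired infinite subfamily in each stratum.
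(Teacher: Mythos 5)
Your skeleton is exactly the paper's: you apply \Cref{new-criterion} with conjugating matrices whose images of $e_1$ are $e_1$, $e_2$ and $e_1+e_2$, note that condition A) holds automatically for each of the three pairs, and reduce everything to finding, for each prime $p$, two of the three exponents prime to $p$; the paper's choices are, up to inessential relabelling, your $(I,S,U)$, since $ST^{m}S^{-1}=T'^{-m}$ and $UT^{m}U^{-1}=T''^{m}$ with $T''=T'TT'^{-1}$. Two small inaccuracies in this part: the ``only if'' of your pairwise-coprimality equivalence is false as stated (nothing forbids using further directions, or several independent parabolic exponents in the same direction), and ``factors coprime to $p$ can be absorbed'' points the wrong way --- passing to multiples only \emph{adds} prime factors, so the usable statement is simply that condition B) at $p$ holds for a pair iff both chosen exponents are prime to $p$. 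Neither of these harms the strategy.

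The genuine gap is that you stop precisely where the content of the theorem lies, and the deferred part is not routine. Concretely, the paper must (a) construct one-cylinder origamis with prescribed singularity data in an \emph{arbitrary} stratum $\HHH(\alpha_1,\ldots,\alpha_{p+2q})$, which it does by slitting and regluing the building blocks of \Cref{example:building_blocks} and verifying the cycle type of the commutator (\Cref{theorigami}); (b) compute the two auxiliary decompositions (\Cref{lem:parabolics}): the blocks are designed so that \emph{every} cycle of $\sigma_b$ has length $1$, $3$ or $5$, making $m_v=15$ an absolute constant independent of the stratum and of the free parameter $l$ --- this uniform cap on the vertical exponent is the decisive trick, and nothing in your outline supplies a substitute for it --- while the diagonal permutation $\sigma_b\sigma_a$ is shown to consist of one long cycle of length $L-4q$ together with $2q$ transpositions, whence $m_d=2(L-4q)$; and (c) close the arithmetic: with $(m_h,m_v,m_d)=(L,15,2(L-4q))$, the conditions $\gcd(L,30q)=1$ and $3,5\nmid L-4q$ of \Cref{prop:conditions} give the required coprimalities, and choosing $L$ prime with $L>4q$ in suitable residue classes modulo $3$ and $5$ produces infinitely many admissible $l$ by Dirichlet. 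Your plan calls for closed formulas for $m_d$ ``uniformly across the strata'' and for congruence restrictions forcing pairwise coprimality, but offers no reason such a family exists in every stratum: for a generic one-cylinder origami the vertical least common multiple grows with the surface and shares factors with $L$, so the existence claim you defer is exactly as hard as the theorem itself. As it stands, the proposal is a correct reduction plus a declaration of intent, not a proof.
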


\textbf{ Acknowledgement:} We would like to thank Martin M\"oller for helpful comments.

\section{Preliminaries}

In this section we give a concise introduction to translation surfaces,
origamis and Veech groups suited to the purpose of this article. You can find 
more elaborate introductions to this topic for example  in 
\cite{HS1, earle+gardiner1997, masur+tabachnikov2002,Sc1,Vorobets1997}. For the proofs 
of the facts that we state here we refer to these references.\\

{\textbf Translation surfaces, origamis and strata}
A {\em (finite) translation surface} is a surfaces $X$ with an atlas $\mu$
to $\RR^2$
such that all transition maps of the atlas $\mu$ are translations. 
The translation surface inherits a natural metric from the Euclidean 
metric in $\RR^2$. Furthermore we have a well-defined notion of directions,
since they are invariant under translations. Thus we may speak
for example of {\em horizontal} and {\em vertical geodesics}, or more general
of geodesics in direction $v \in \RR^2$. Moreover, using local charts we can
assign to each geodesic segment a vector in the plane $\RR^2$ which is
its {\em development vector}.
Let $\overline{X}$ be the metric
completion of $X$. The points in $\overline{X} \backslash X$ are called
{\em the singularities} of $X$. In this article, we consider the classical
situation of finite {\em translation surfaces}, i.e. translation surfaces
$(X,\mu)$ such that the metric completion is compact, the set of singularities is
discrete  and all singularities are 
cone points of finite cone angle $k2\pi$ ($k\in \NN$). A geodesic segment
between two (possible equal) singularities which does not contain any further singularity
is called a {\em saddle connection}.  
Further important geometric data of the translation surface $(X,\mu)$
are its set of closed geodesics and its set of maximal cylinders in a given direction
$v \in \RR^2$.  Here a maximal cylinder is a maximal connected set of homotopic simple
closed geodesics.
For genus $g \geq 2$, every closed geodesic lies in a unique maximal cylinder in the direction 
$v$ of the geodesic which is bounded by saddle connections, since we may move the 
geodesic transversely to $v$ until we hit singularities.\\
Finite translation surfaces 
are naturally distinguished into strata by their type of singularities. More precisely, 
a finite translation surface $(X,\mu)$ is said to be {\em of 
type $(\alpha_1,\ldots, \alpha_n)$}, if $\overline{X}$ has $n$ singularities
of cone angle $(\alpha_1+1)\cdot2\pi$, \ldots, $(\alpha_n+1)\cdot2\pi$.
The usage of $\alpha_i$ instead of $\alpha_i+1$ relates to the fact that 
a finite translation surface can equivalently 
be defined as a closed Riemann surface $X$ together with a holomorphic 
differential $\omega$. The charts of the atlas are then obtained by integrating 
with respect to $\omega$, the singularities are the zeroes of $\omega$
and $\alpha_i$ is the order of the zero. We then define the {\em stratum} 
$\HHH_g(\alpha_1,\ldots, \alpha_n)$
as the set of all equivalence classes of translation surfaces of type $(\alpha_1,\ldots, \alpha_n)$
of genus $g$.
Two translation surfaces $(X_1,\mu_1)$ and $(X_2,\mu_2)$ are {\em equivalent}, if there exists
a  translation $f: X_1 \to X_2$, i.e. a  homeomorphism which is a 
translation on each chart. We will usually write $(X,\mu) \in \HHH_g(\alpha_1,\ldots, \alpha_r)$
for the equivalence class defined by $(X,\mu)$.
The set $\HHH_g(\alpha_1,\ldots, \alpha_n)$ is endowed with a topology
itself, more precisely there is a natural way to define local coordinates as a manifold on a covering of 
it  (cf.~\cite[Section 6.3]{yoccoz2010}). Furthermore, $\HHH_g(\alpha_1,\ldots, \alpha_n)$ is endowed with a natural action
of $\SL(2,\RR)$ as follows. For a translation surface $(X,\mu)$ and a matrix $A \in \SL(2,\RR)$,
we define $A\cdot(X,\mu) = (X,\mu_A)$ to be the translation surface obtained from 
$(X,\mu)$ by composing each chart of $\mu$ with the linear map $z \mapsto A\cdot z$.
It is one of the main objectives in the field to understand the orbits 
of this action.\\
There is yet an other way how to define finite translation surfaces:
Take finitely many polygons in the plane such that their edges come in pairs of edges
of same length and same direction. Glue for each pair its two edges by a translation. 
In this way we obtain a closed surface $\overline{X}$. 
The points which come from the vertices of the polygons may be cone points. Removing them
defines a translation surface $X$. 
If all the polygons which form the translation surface are copies of the Euclidean 
unit square, the translation surface is called an {\em origami} or a {\em square-tiled surface}
(cf. Figure~\ref{figure:WMS}). 
\\[3mm]

\begin{center}
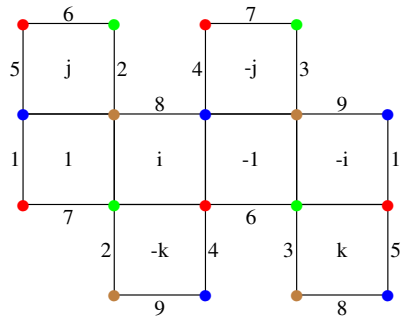
\begin{figure}[h]
\centering
\hspace*{1mm} 
\begin{xy}
<1.2cm,0cm>:
(0,1)*{\OriSquare{1}{}{}{1}{7}};
(1,1)*{\OriSquare{i}{}{8}{}{}};
(2,1)*{\OriSquare{-1}{}{}{}{6}};
(3,1)*{\OriSquare{-i}{1}{9}{}{}};
(1,0)*{\OriSquare{-k}{4}{}{2}{9}};
(2,2)*{\OriSquare{-j}{3}{7}{4}{}};
(3,0)*{\OriSquare{k}{5}{}{3}{8}};
(0,2)*{\OriSquare{j}{2}{6}{5}{}};
(-0.5,0.52)*{{\color{red}\bullet}};
(3.5,0.52)*{{\color{red}\bullet}};
(-0.5,2.52)*{{\color{red}\bullet}};
(1.5,.52)*{{\color{red}\bullet}};
(1.5,2.52)*{{\color{red}\bullet}};
(0.5,0.52)*{{\color{green}\bullet}};
(2.5,2.52)*{{\color{green}\bullet}};
(2.5,0.52)*{{\color{green}\bullet}};
(.5,2.52)*{{\color{green}\bullet}};
(-0.5,1.52)*{{\color{blue}\bullet}};
(3.5,1.52)*{{\color{blue}\bullet}};
(1.5,-.48)*{{\color{blue}\bullet}};
(1.5,1.52)*{{\color{blue}\bullet}};
(3.5,-.48)*{{\color{blue}\bullet}};
(0.5,1.52)*{{\color{brown}\bullet}};
(2.5,-.48)*{{\color{brown}\bullet}};
(2.5,1.52)*{{\color{brown}\bullet}};
(.5,-.48)*{{\color{brown}\bullet}};
\end{xy}
\caption{Gluing edges with same labels defines an origami of genus 3. This origami steams from \cite{WMS}.}
\label{figure:WMS}
\end{figure}
\end{center}

{\textbf Veech groups and the action of \boldmath{$\SL(2,\mathbb{R})$}} Let $(X,\mu)$ be a finite translation surface
of genus $g$ in some stratum $\HHH_g(\alpha_1, \ldots, \alpha_n)$.
The {\em Veech group} $\Gamma(X,\mu)$ is the stabiliser of $(X,\mu)$ for the action of $\SL(2,\RR)$
on $\HHH_g(\alpha_1, \ldots, \alpha_n)$ .  It can equivalently be
defined in the following way. Consider the group Aff$(X,\mu)$
of all {\em affine homeomorphisms} of $X$, i.e. homeomorphisms which are with respect to  charts
of the form $z \mapsto A\cdot z + b$ with $A \in \SL(2,\RR)$ and $b \in \RR^2$. It turns out
that since all transition maps are translations the matrix $A$ is independent of the chosen charts.
We obtain a group homomorphism $D: \aff(X,\mu) \to \SL(2,\RR)$ which maps the affine homeomorphism $f$
to the matrix $A$, i.e. to its derivative. 
The Veech group is the image of $D$, hence it consists of
all matrices $A$ which occur as derivative of some affine homeomorphism of the surface. It was already
shown by Veech himself that $\Gamma(X,\mu)$ 
is a discrete subgroup of $\SL(2,\RR)$ (cf. \cite[Proposition 2.7]{Veech3} 
or \cite[Proposition 3.3]{Vorobets1997} for a very nice presentation). Furthermore, two translation surfaces
in the same $\SL(2,\RR)$-orbit have conjugated Veech groups.\\
Let us consider the example of the torus $\RR^2/\ZZ^2$ endowed with the translation structure 
of its universal covering $\RR^2$. Observe that the affine homeomorphisms
lift to affine homeomorphisms of $\RR^2$ which preserve the lattice $\ZZ^2$ up to a translation. 
And all such maps descend to the torus.
Therefore the Veech group is in this case $\SL(2,\ZZ)$.\\[3mm]

{\textbf Special properties of origamis.}
We will use three equivalent ways to describe origamis, as explained in the following.
The equivalences are described in more details in \cite[Section 1]{ExamplesNoncongruenceGroups}.
Recall that we obtain an origami by gluing copies of the Euclidean 
unit square along their edges which leads to a closed surface $\overline{X}$
tiled by squares. Hence, an origami made from $d$ unit squares is fully 
determined by a pair of permutations $(\sigma_a,\sigma_b)$
as follows. We label the squares with $\{1,\ldots, d\}$, then $\sigma_a(i)$ and $\sigma_b(i)$
denote the right and the upper neighbour of the square labelled by $i \in \{1,\ldots, d\}$.
The fact that the surface is connected is equivalent to the fact that
the subgroup of $S_d$ generated by the two permutations $\sigma_a$ and $\sigma_b$
acts transitively on the set $\{1,\ldots, d\}$.
If we choose an other labelling of the squares this leads to a simultaneous conjugation of the pair
of permutations $(\sigma_a,\sigma_b)$. Altogether, we obtain an equivalence between the set of origamis
up to translation equivalence and the set of pairs $(\sigma_a,\sigma_b)$ in $S_d^2$
up to simultaneous conjugation. There is yet an other equivalent description of origamis
which we will use. Observe that the surface $\overline{X}$ comes with a
covering $p$ to the square-torus $\TT$ obtained by gluing parallel edges of the unit square. Namely, 
we map each square on $\overline{X}$ to the one square forming $\TT$ and this map 
is well-defined with respect to the gluings. The map $p$ is an unramified covering for
all points which are not vertices. Hence if $\infty \in \TT$ is the point
obtained from the vertex of the unit square, then $p: \overline{X} \to \TT$
is ramified at most over $\infty$.\\[3mm]
For an origami $(X,\mu)$ the Veech group is always a finite index subgroup of $\SL(2,\ZZ)$. 
Here we should point to a subtlety in the definition of origami.
Recall that we obtain the origami by gluing copies of the Euclidean unit square along their edges.
More precisely, this gives us the metric completion $\overline{X}$ of the translation surface. 
The singularities of the translation surface stem from the vertices of the squares.
However  not every vertex has to be a singularity. 
Now there are two different natural ways how two define the translation surfaces $X$. We might either 
remove only the singularities of $\overline{X}$, or we might remove all points which come from a vertex.
In the second case the Veech group is indeed a subgroup of $\SL(2,\ZZ)$ of finite index, 
in the first case it is commensurable to $\SL(2,\ZZ)$. However it turns out that
for {\em reduced origamis} one obtains equal  Veech groups for the translation surface with only
singularities removed and for the surface with all vertex points removed (cf. \cite[Remark 2.9]{KappesPhDthesis}). 
Following \cite[Section 1.2]{matheus+moeller+yoccoz2015},
we call an origami {\em reduced}, if the set of development vectors of all saddle connections generate $\ZZ^2$.
This is a very mild restriction, since any origami 
$O$ is affine equivalent to a reduced origami $O'$, i.e. there is some matrix $A \in \GL(2,\RR)$
such that $O' \sim A\cdot O$ and thus their Veech groups are conjugated in $\GL(2,\RR)$. 
Here  the action of $\GL(2,\RR)$ on translation surfaces is defined just in the 
same way as the action of $\SL(2,\RR)$. In this article we will restrict to reduced origamis
and thus all Veech groups are subgroups of $\SL(2,\ZZ)$ of finite index.\\[3mm]
Suppose that an origami is now given by the pair of permutations $(\sigma_a,\sigma_b)$.
We obtain the stratum in which the associated translation surface lives 
in the following way: Let $p:\overline{X} \to \TT$ be the corresponding ramified cover of the torus.
Let us choose a loop around the vertex of $\TT$, namely $xyx^{-1}y^{-1}$, where
$x$ and $y$ are the closed curves on $\TT$ shown in \Cref{torus}. 
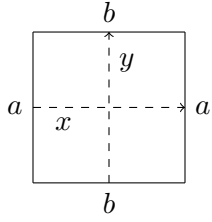
\begin{figure}
\begin{center}
\begin{tikzpicture}
\draw (0,0) -- (0,2) node[midway,left]{$a$};
\draw (0,2)  -- (2,2) node[midway,above]{$b$};
\draw (2,2)  -- (2,0) node[midway, right]{$a$};
\draw (0,0) -- (2,0) node[midway, below]{$b$};
\draw[dashed, ->] (0,1) -- (2,1) node[pos=.2, below]{$x$};
\draw[dashed, ->] (1,0) -- (1,2) node[pos=.8, right]{$y$};
\end{tikzpicture}
\caption{A torus $T$ with the standard system of generators of the fundamental group}
\label{torus}  
\end{center}
\end{figure}
The connected components of
the preimage of this curve are loops around the singularities. Hence the number of the
connected components are the number of singularities. Furthermore if the multiplicity of 
a component is $k$, then the corresponding singularity is of angle $2k\pi$. 
Hence the commutator $[\sigma_b^{-1},\sigma_a^{-1}]$ determines the type of the singularities that we obtain.
More precisely, each cycle of length $k$ in the commutator corresponds to a singularity of cone
angle $k\cdot 2\pi$. \\
In the proof our result the following two facts are crucial which are described in more detail e.g.
in \cite[2.2,2.3]{deficiency}:
\begin{enumerate}
\item
  The action  of $\SL(2,\RR)$ on translation surfaces restricts to an action of $\SL(2,\ZZ)$
  on origamis. The action can be explicitly given as described in the following. The two generators 
  \[S = \begin{pmatrix} 0&-1\\1&0 \end{pmatrix} \mbox{ and } T = \begin{pmatrix} 1&1\\0&1 \end{pmatrix}\]  
  act  on an origami given as pair of permutations $(\sigma_a,\sigma_b)$ in the following way:
  \[S : (\sigma_a,\sigma_b) \mapsto (\sigma_b^{-1}, \sigma_a^{-1}) \mbox{ and } 
  T: (\sigma_a,\sigma_b) \mapsto (\sigma_a,\sigma_b\sigma_a^{-1})\]
\item
  Suppose that the translation surface $(X,\mu)$ defined by a primitive origami $O$ 
  decomposes in the horizontal direction into $k$ cylinders of height 1 and length $m_1, \ldots, m_k$
  and let $m$ be a multiple of $m_1$, \ldots, $m_k$. Then $T^m$ is in the Veech group $\Gamma(O)$.
  Similarly, if $(X,\mu)$ decomposes in the vertical direction into $l$ cylinders of length $m'_1$, \ldots, $m'_l$ and $m'$
  is a multiple of  $m'_1$, \ldots, $m'_l$, then $\Gamma(X,\mu)$ contains $T'^{m'}$. Here 
  \[T' = \begin{pmatrix}1&0\\1&1\end{pmatrix}.\]
    If $O$ is given by the pair of permutations
  $(\sigma_a,\sigma_b)$, then the numbers $m_1$, \ldots, $m_k$
  are precisely the cycle lengths of $\sigma_a$ and $m'_1$, \ldots, $m'_l$ are the cycle lengths of $\sigma_b$.
\end{enumerate}

\section{A criterion for being a totally non congruence group}

We denote 
\begin{equation}\label{defT}
  T = \begin{pmatrix} 1&1\\0&1 \end{pmatrix}, T' = \begin{pmatrix} 1&0\\1&1 \end{pmatrix} \mbox{ and } 
  S = \begin{pmatrix} 0&-1\\1&0 \end{pmatrix} .
\end{equation}
Furthermore $p_n: \SL(2,\ZZ) \to \SL(2,\ZZ/ n\ZZ)$ is the canonical projection. We denote
the images of the matrices $T$ and $T'$ in $\SL(2,\ZZ/ n\ZZ)$ also by $T$ and $T'$. Finally, we denote by
$I$ the $2\times 2$-identity matrix over the respective ring. \\

We start with a small but very useful calculation.

\begin{lem}\label{conjugation}
  Let $A,B \in \GL(2,\ZZ/n\ZZ)$ with $A \cdot {1 \choose 0} = B  \cdot {1 \choose 0}$.
  Then we have that
  \[ ATA^{-1} = BT^{\det(B)/\det(A)}B^{-1}.
  \]
\end{lem}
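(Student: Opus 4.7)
The plan is to reduce the identity to a direct computation by left-multiplying both sides by $B^{-1}$ and right-multiplying by $B$. Writing $C := B^{-1}A$, the hypothesis $A\cdot e_1 = B\cdot e_1$ becomes $C\cdot e_1 = e_1$, which is exactly the condition that $C$ is upper triangular with top-left entry $1$:
\[
C = \begin{pmatrix} 1 & m \\ 0 & d \end{pmatrix}
\]
for some $m \in \ZZ/n\ZZ$ and $d = \det(C) = \det(A)/\det(B) \in (\ZZ/n\ZZ)^*$. Since $A, B \in \GL(2,\ZZ/n\ZZ)$, the element $d$ is a unit, so $1/d$ makes sense modulo $n$.

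Now the claim $ATA^{-1} = BT^{\det(B)/\det(A)}B^{-1}$ is equivalent, after conjugating by $B^{-1}$, to the cleaner identity
\[
CTC^{-1} = T^{1/d}.
\]
This is a routine two-by-two computation: $C^{-1} = \begin{pmatrix} 1 & -m/d \\ 0 & 1/d \end{pmatrix}$, and multiplying out $C \cdot T \cdot C^{-1}$ collapses (the $m$-terms cancel between the two factors) to $\begin{pmatrix} 1 & 1/d \\ 0 & 1 \end{pmatrix}$, which is exactly $T^{1/d}$. Substituting $1/d = \det(B)/\det(A)$ and conjugating back by $B$ gives the statement.

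The only step requiring any thought is recognising that the stabiliser of $e_1$ in $\GL(2,\ZZ/n\ZZ)$ consists precisely of the upper-triangular matrices with a $1$ in the top-left corner; after that, there is no real obstacle, just the brief verification that $CTC^{-1}$ is the specific power $T^{1/d}$ of $T$. In effect the lemma is a coordinate-free way of saying that if two matrices send $e_1$ to the same vector, their conjugates of $T$ differ only by the power of $T$ that records the ratio of determinants.
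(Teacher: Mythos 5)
Your proof is correct and takes essentially the same route as the paper's: both hinge on the observation that the stabiliser of $e_1$ in $\GL(2,\ZZ/n\ZZ)$ consists of matrices of the form $\begin{pmatrix} 1 & m \\ 0 & d \end{pmatrix}$ with $d$ a unit, followed by the direct $2\times 2$ computation showing that conjugating $T$ by such a matrix gives $T^{1/d}$. The only (cosmetic) difference is the order of operations: you reduce first, setting $C = B^{-1}A$ and doing a single computation, while the paper first computes $ATA^{-1}$ and $BTB^{-1}$ in the special case $Ae_1 = Be_1 = e_1$ and then deduces the general case by applying that result to the pair $(A^{-1}B,\, I)$.
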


Observe for the statement in \Cref{conjugation} that $T^a$ with $a \in \ZZ/n\ZZ$ gives a well-defined
matrix in $\GL(2,\ZZ/n\ZZ)$ and we have 
for any $A \in \GL(2,\ZZ/n\ZZ)$ that $AT^aA^{-1} = (ATA^{-1})^a$.

\begin{proof}
  Suppose first that $A{1 \choose 0} = B{1 \choose 0} = {1 \choose 0}$.
  Hence  we can write
  \[
  A= \begin{pmatrix} 1&x\\0&\det(A)\end{pmatrix} \mbox{ and }
  B = \begin{pmatrix} 1&y\\0&\det(B)\end{pmatrix}
  \] 
  with $x, y \in \ZZ/n\ZZ$.
    A short calculation gives:
    \[ 
    ATA^{-1} = \begin{pmatrix}1 &\det(A)^{-1}\\0&1\end{pmatrix} \mbox{ and }
    BTB^{-1} = \begin{pmatrix}1 &\det(B)^{-1}\\0&1\end{pmatrix}\]
  Thus the claim holds in this case. 
  In the general situation we consider the two matrices $A^{-1}B$ and $I$ satisfying
  $A^{-1}B {1 \choose 0} = I{1 \choose 0} = {1 \choose 0}$ and obtain from the  
  preceding consideration
  \[T = (A^{-1}B)T^{\det(A^{-1}B)}(B^{-1}A) = A^{-1}BT^{\det(B)/\det(A)}B^{-1}A,\]
  which implies the claim.
\end{proof}

We now deduct from \Cref{conjugation} a criterion whether 
two conjugates of $T$ generate the full group $\SL(2,\ZZ/p^r\ZZ)$.

\begin{lem}\label{lemma:generate}
Let  $p$ be prime and $r \in \NN$.
Let $\Gamma$ be a subgroup of  $\SL(2,\ZZ/p^r\ZZ)$.
Suppose that $\Gamma$ contains $A_1TA_1^{-1}$ and $A_2TA_2^{-1}$
with $A_1, A_2 \in \SL(2,\ZZ/p^r\ZZ)$ such that
\[\forall m \in \NN:\; mA_1e_1 \neq  A_2e_1 \mod p, \mbox{ where } e_1 = {1 \choose 0} \in (\ZZ/p^r\ZZ)^2\] 
Then $\Gamma = \SL(2,\ZZ/p^r\ZZ)$.
\end{lem}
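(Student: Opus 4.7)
The plan is to reduce to the classical generation fact $\langle T, T'\rangle = \SL(2,\ZZ/p^r\ZZ)$, extracting $T$ and (a unit power of) $T'$ from the two given conjugates of $T$ by exploiting Lemma \ref{conjugation}.

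First I would replace $\Gamma$ by the conjugate subgroup $A_1^{-1}\Gamma A_1 \leq \SL(2,\ZZ/p^r\ZZ)$, which is again a subgroup of $\SL(2,\ZZ/p^r\ZZ)$ and for which the conclusion is equivalent. After this reduction $\Gamma$ contains $T$ itself and $BTB^{-1}$ with $B := A_1^{-1}A_2$, and the hypothesis translates to saying that $Be_1 = \binom{b_1}{b_2}$ is no $\mathbb{F}_p$-multiple of $e_1$, i.e.\ $b_2$ is a unit in the local ring $\ZZ/p^r\ZZ$.

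Now by Lemma \ref{conjugation}, in $\SL$ the conjugate $BTB^{-1}$ depends only on the vector $Be_1$; consequently $T^n(BTB^{-1})T^{-n} = (T^nB)\,T\,(T^nB)^{-1}$ is again a conjugate of $T$ sitting in $\Gamma$, and the defining vector is $T^nBe_1 = \binom{b_1+nb_2}{b_2}$. Since $b_2$ is a unit I can choose $n \equiv -b_1 b_2^{-1} \pmod{p^r}$, obtaining inside $\Gamma$ an element of the form $B'T(B')^{-1}$ with $B'e_1 = \binom{0}{b_2}$. Using the general formula $ATA^{-1} = I + \binom{a}{c}(-c,\,a)$ for $A \in \SL$ with $Ae_1 = \binom{a}{c}$, a direct calculation then gives
\[
B'T(B')^{-1} \;=\; \begin{pmatrix} 1 & 0 \\ -b_2^2 & 1 \end{pmatrix} \;=\; (T')^{-b_2^2}.
\]

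To finish, I observe that $-b_2^2$ is a unit in $\ZZ/p^r\ZZ$ and $T'$ has order $p^r$, so $(T')^{-b_2^2}$ generates the full cyclic group $\langle T'\rangle$; hence $T' \in \Gamma$. Combined with $T \in \Gamma$ and the standard fact that $T,T'$ generate $\SL(2,\ZZ)$ and thereby (via the surjection $\SL(2,\ZZ) \twoheadrightarrow \SL(2,\ZZ/p^r\ZZ)$) the whole quotient, this yields $\Gamma = \SL(2,\ZZ/p^r\ZZ)$. The proof is essentially a short computation; the only points requiring care are checking that the reduction $A_1 \mapsto I$ preserves the hypothesis on $B e_1$, and the (classical) invocation $\langle T, T'\rangle = \SL(2,\ZZ/p^r\ZZ)$.
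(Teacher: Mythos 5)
Your proof is correct, and while it shares the paper's overall skeleton --- normalise $A_1 = I$ by conjugation, produce $T$ together with a unit power of $T'$ inside the group, then invoke the classical fact $\langle T, T'\rangle = \SL(2,\ZZ/p^r\ZZ)$ (which the paper likewise uses without proof) --- the mechanism by which you extract the $T'$-power is genuinely different. The paper conjugates the \emph{ambient group} by the auxiliary matrix $B = \begin{pmatrix} 1 & a\\ 0 & c\end{pmatrix} \in \GL(2,\ZZ/p^r\ZZ)$, which fixes $e_1$ and carries $A_2e_1$ to $e_2$; \Cref{conjugation} then gives $(B^{-1}TB)^{\det(B^{-1})} = T$ and $(B^{-1}A_2TA_2^{-1}B)^{\det(B^{-1})} = STS^{-1} = T'^{-1}$ inside $B^{-1}\Gamma B$, and since $B$ is in general not in $\SL$, the paper must additionally invoke normality of $\SL(2,\ZZ/p^r\ZZ)$ in $\GL(2,\ZZ/p^r\ZZ)$ to transfer $B^{-1}\Gamma B = \SL(2,\ZZ/p^r\ZZ)$ back to $\Gamma$. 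You instead work entirely inside $\SL$, and indeed inside $\Gamma$ itself: using the element $T \in \Gamma$ you conjugate $BTB^{-1}$ by $T^n$ to shift the defining vector to ${0 \choose b_2}$ (note this step is pure associativity, $T^n(BTB^{-1})T^{-n} = (T^nB)T(T^nB)^{-1}$, rather than an application of \Cref{conjugation}), and your rank-one formula $ATA^{-1} = I + {a \choose c}(-c,\,a)$, which I checked, produces $(T')^{-b_2^2} \in \Gamma$ directly; since $-b_2^2$ is a unit and $T'$ has order $p^r$, you recover $T'$. Your route buys a cleaner logical structure --- no $\GL$ detour and no normality appeal (your initial reduction conjugates by $A_1 \in \SL$, which is harmless) --- at the modest cost of the order computation for $T'$, whereas the paper absorbs the determinant twist uniformly as the exponent $\det(B^{-1})$ in \Cref{conjugation}. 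The one point genuinely requiring care in your version, namely that the hypothesis transforms under $A_1 \mapsto I$ into the statement that the second coordinate $b_2$ of $Be_1$ lies in $(\ZZ/p^r\ZZ)^{\times}$, you verify correctly: $mA_1e_1 \equiv A_2e_1 \bmod p$ for some $m$ iff $me_1 \equiv Be_1 \bmod p$, since $A_1$ is invertible mod $p$.
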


\begin{proof}
By conjugation we may assume that $A_1 = I$ is the identity matrix.
Consider the vector ${a \choose c} = A_2\cdot e_1$. By assumption $c$
is not divisible by $p$, hence $c$ is in the multiplicative group $(\ZZ/p^r\ZZ)^{\times}$. 
Consider the following matrix $B \in \GL(2,\ZZ/p^r\ZZ)$ and its inverse $B^{-1}$:
\[B = \begin{pmatrix} 1&a\\0&c \end{pmatrix} \mbox{ and } B ^{-1} = c^{-1} \cdot \begin{pmatrix}c&-a\\0&1 \end{pmatrix}.\]
It follows directly from the definition of $B$ that
\[B^{-1}e_1 = e_1 \mbox{ and } B^{-1}A_2e_1 = e_2 = Se_1 \mbox{, where } e_2 = {0 \choose 1}\]
Hence we obtain from \Cref{conjugation}:
\begin{equation}\label{eqn:contains}
  (B^{-1}TB)^{\det(B^{-1})} = T \mbox{ and } (B^{-1}A_2TA_2^{-1}B)^{\det(B^{-1})} = STS^{-1} = T'^{-1}
\end{equation} 
It follows that \[\SL(2,\ZZ/p^r\ZZ) \; = \;\; <T,T'> \;\; \subseteq \;  B^{-1}\Gamma B \; \subseteq \; \SL(2,\ZZ/p^r\ZZ).\] 
Hence we have $B^{-1}\Gamma B  = \SL(2,\ZZ/p^r\ZZ)$ and thus $\Gamma = \SL(2,\ZZ/p^r\ZZ)$.
Here it is crucial that $\SL(2,\ZZ/p^r\ZZ)$ is a normal subgroup in $\GL(2,\ZZ/p^r\ZZ)$.
\end{proof}

\Cref{lemma:generate} is the main ingredient that we need
to prove \Cref{new-criterion}, which provides us with a criterion
whether a group is a totally non congruence group.

\begin{thm}\label{new-criterion}
  Let $\Gamma$ be a finite index subgroup of $\SL(2,\ZZ)$. Denote
  $e_1 = {1 \choose 0}$. Suppose that for
  each prime $p$ there exist matrices $A_1, A_2 \in \SL(2,\ZZ)$ 
  with the following properties:
  \begin{itemize}
  \item[A)]  
    $\forall\; j \in \NN: A_1e_1 \neq j\cdot A_2e_1$ modulo $p$.
  \item[B)] 
    There exist $m_1,m_2 \in \NN$ with
    \[A_1T^{m_1}A_1^{-1} \mbox{ and } A_2T^{m_2}A_2^{-1} \mbox{ are contained in } \Gamma,\]
    such that 
    p divides neither $m_1$ nor $m_2$.
  \end{itemize}
  Then $\Gamma$ is a totally non congruence group.
\end{thm}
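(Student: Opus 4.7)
The plan is to show that for every $n \geq 1$ the reduction $p_n(\Gamma)$ equals $\SL(2,\ZZ/n\ZZ)$, which is the definition of being a totally non-congruence group. My strategy is to reduce to the prime-power case via the Chinese Remainder Theorem, apply \Cref{lemma:generate} at each prime power, and then reassemble across primes by Goursat's lemma.

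I would first fix a prime $p$ and an integer $r \geq 1$, and extract from the hypothesis matrices $A_1, A_2 \in \SL(2,\ZZ)$ and exponents $m_1, m_2 \in \NN$ with $p \nmid m_i$ and $A_i T^{m_i} A_i^{-1} \in \Gamma$. Because $p \nmid m_i$, the integer $m_i$ is a unit in $\ZZ/p^r\ZZ$, so there exists $k_i \in \NN$ with $k_i m_i \equiv 1 \pmod{p^r}$. Reducing mod $p^r$ and raising to the $k_i$-th power,
\[
(A_i T^{m_i} A_i^{-1})^{k_i} \;=\; A_i T^{k_i m_i} A_i^{-1} \;=\; A_i T A_i^{-1},
\]
so $A_i T A_i^{-1}$ lies in $p_{p^r}(\Gamma)$. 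I would then observe that condition (A) of the theorem is equivalent modulo $p$ to the hypothesis of \Cref{lemma:generate}: the first column of any matrix in $\SL(2,\ZZ)$ reduces to a primitive (in particular nonzero) vector mod $p$, and for nonzero vectors in $(\ZZ/p\ZZ)^2$ the relation ``one is a scalar multiple of the other'' is symmetric, so if $A_1e_1 = j\, A_2e_1 \pmod{p}$ failed with some $j$, an inverse would produce a witness for the lemma's condition, and vice versa. Thus \Cref{lemma:generate} applied in $\SL(2,\ZZ/p^r\ZZ)$ yields $p_{p^r}(\Gamma) = \SL(2,\ZZ/p^r\ZZ)$.

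For the combining step I would factor $n = \prod_i p_i^{r_i}$ and use the CRT isomorphism $\SL(2,\ZZ/n\ZZ) \cong \prod_i \SL(2,\ZZ/p_i^{r_i}\ZZ)$. By the previous step $p_n(\Gamma)$ surjects onto each factor. An iterated application of Goursat's lemma then reduces the desired equality to showing that $\SL(2,\ZZ/p^a\ZZ)$ and $\SL(2,\ZZ/q^b\ZZ)$ share no nontrivial common quotient for distinct primes $p,q$. This is a standard structural fact: the composition factors of $\SL(2,\ZZ/p^r\ZZ)$ are cyclic $p$-groups together with, for $p \geq 5$, the nonabelian simple group $\mathrm{PSL}(2,\ZZ/p\ZZ)$, and the small-prime abelianizations (namely $\ZZ/2\ZZ$ at $p=2$ and $\ZZ/3\ZZ$ at $p=3$) are coprime, so no nontrivial common quotient can arise.

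The main obstacle is precisely this combining step: the prime-power assertion is a direct corollary of \Cref{lemma:generate}, but gluing the prime-power surjections into a single mod-$n$ surjection requires the no-common-quotient fact, which in turn needs some case-by-case bookkeeping at the exceptional primes $p \in \{2,3\}$. By contrast, the prime-power step itself is essentially automatic once one notices that condition (A) and the hypothesis of \Cref{lemma:generate} coincide over the prime field $\ZZ/p\ZZ$.
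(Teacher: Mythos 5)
Your proposal is correct in outline, and your prime-power step matches the paper's use of \Cref{lemma:generate} --- including your (worthwhile, and in the paper implicit) observation that condition A), which forbids $A_1e_1 = j\cdot A_2e_1 \bmod p$, is equivalent to the lemma's hypothesis forbidding $m\cdot A_1e_1 = A_2e_1 \bmod p$, since both first columns are nonzero mod $p$ and proportionality of nonzero vectors over a field is symmetric. But your combining step is genuinely different from the paper's. The paper never invokes Goursat: for a prime factor $p = p_i$ of $n$ it writes $n = p^r n_2$ with $\gcd(p,n_2)=1$, uses B\'ezout to pick $a,b$ with $1 = ap^r + bm_1m_2n_2$, and sets $K = bm_1m_2n_2$. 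Since $m_i \mid K$, the matrices $A_iT^KA_i^{-1}$ lie in $\Gamma$ itself (not just in some reduction), and since $K \equiv 1 \bmod p^r$ and $K \equiv 0 \bmod n_2$, their images in $\SL(2,\ZZ/n\ZZ) = \SL(2,\ZZ/p^r\ZZ)\times\SL(2,\ZZ/n_2\ZZ)$ are $(A_iTA_i^{-1},I)$, i.e.\ supported in a single CRT factor. \Cref{lemma:generate} then yields $\pr_n(\Gamma) \supseteq \SL(2,\ZZ/p^r\ZZ)\times\{I\}$ for each prime factor, and these subgroups together generate the whole product. This sidesteps entirely the structure theory of finite quotients that your route requires, at the cost of being slightly less modular than ``surject at each prime power, then glue.''

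Your gluing step is repairable, but as written its justification is flawed. Common composition factors \emph{do} occur across distinct primes: $\ZZ/2\ZZ$ is a composition factor of $\SL(2,\ZZ/p^r\ZZ)$ for every odd $p$ (via the center of $\SL(2,\ZZ/p\ZZ)$), and $\ZZ/3\ZZ$ is a composition factor of both $\SL(2,\ZZ/2\ZZ)\cong S_3$ and $\SL(2,\ZZ/3^b\ZZ)$; so your list ``cyclic $p$-groups plus $\mathrm{PSL}(2,\ZZ/p\ZZ)$'' is incorrect, and in any case composition factors are not the right invariant for Goursat --- common \emph{quotients} are. The clean check is via simple quotients: for $p=2$ the group $\SL(2,\ZZ/2^a\ZZ)$ has order $2^k\cdot 3$, hence is solvable by Burnside, with abelianization $\ZZ/2\ZZ$ or $\ZZ/4\ZZ$ (for $a\geq 2$ it is $\ZZ/4\ZZ$, not $\ZZ/2\ZZ$ as you state --- harmless, as it is still coprime to $3$), so its only simple quotient is $\ZZ/2\ZZ$; for $p=3$ the order is $2^3\cdot 3^k$, again solvable, abelianization $\ZZ/3\ZZ$, so the only simple quotient is $\ZZ/3\ZZ$; for $p\geq 5$ the group is perfect with unique simple quotient $\mathrm{PSL}(2,\ZZ/p\ZZ)$. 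These are pairwise non-isomorphic, and since a simple quotient of a direct product is a quotient of one of the factors, the iterated Goursat argument goes through. With that correction your proof is complete; the comparison is that your route buys a conceptually standard two-stage structure at the price of genuine finite-group-theoretic input, while the paper's B\'ezout exponent trick keeps the whole proof elementary and self-contained.
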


\begin{proof}
  We have to show that $\pr_n(\Gamma) = \SL(2,\ZZ/n\ZZ)$ for all $n\in\NN$.\\
  Let $n = {p_1}^{\hspace*{-.7mm}r_1}\cdot\ldots\cdot{\hspace*{-.7mm}p_k}^{r_k}$ be the prime factorisation of $n$.
  We thus have by the Chinese remainder theorem:
  \[ \SL(2,\ZZ/n\ZZ) = \SL(2,\ZZ/{p_1}^{\hspace*{-.7mm}r_1}\ZZ) \times  \ldots \times \SL(2,\ZZ/{p_k}^{\hspace*{-.7mm}r_k}\ZZ)\]
  We show: 
  \begin{eqnarray}\label{eqn:surjects}
    \forall i \in \{1,\ldots, k\}:
           \pr_n(\Gamma) \supseteq \{I\} \times \ldots \times \{I\} \times  \SL(2,\ZZ/{p_i}^{\hspace*{-.7mm}r_i}\ZZ) 
            \times \{I\} \times \ldots \times \{I\}.
  \end{eqnarray}
  For $p = p_i$ we decompose $n = {p}^{r} \cdot n_2$ with $\gcd(p,n_2) = 1$. 
  Choose $m_1$, $m_2$ such that they satisfy assumptions A) and B) with respect to $p$.
  In particular, $m_1$ and $m_2$ are coprime to $p$.
  By B{\'e}zout's identity we find $a,b \in \ZZ$ with 
  $1 = a\cdot p^{r}+ b\cdot m_1 m_2 n_2$.\\
  We then have for $K = bm_1m_2n_2$ that 
  \[\Gamma \ni A_1T^{K}A_1^{-1}  = A_1\begin{pmatrix}1&bm_1m_2n_2\\0&1\end{pmatrix}A_1^{-1}.\]
  Furthermore, we have:
  \[
    \begin{array}{lcl}
     A_1T^KA_1^{-1} &\equiv& A_1\cdot \begin{pmatrix} 1&1\\0&1 \end{pmatrix}A_1^{-1} = A_1TA_1^{-1} \mod p^{r} \mbox{ and }\\
     A_1T^KA_1^{-1} &\equiv& I \mod n_2.
    \end{array}
    \]    
    Hence the group $\pr_n(\Gamma)$ contains
   \[\pr_n(A_1T^KA_1^{-1}) = (A_1TA_1^{-1},I) \in 
     \SL(2,\ZZ/{p}^{r}\ZZ) \times \SL(2,\ZZ/n_2\ZZ) = \SL(2,\ZZ/n\ZZ) .\]
   Similarly, we obtain that 
   \[ \pr_n(\Gamma) \ni \pr_n(A_2T^KA_2^{-1}) = (A_2TA_2^{-1},I) \in  
      \SL(2,\ZZ/{p}^{r}\ZZ) \times \SL(2,\ZZ/n_2\ZZ) =  \SL(2,\ZZ/n\ZZ) .\]
   It follows from \Cref{lemma:generate} that
   \[\pr_n(\Gamma) \supseteq  \SL(2,\ZZ/{p}^{r}\ZZ) \times \{I\}.\]
   This implies the claim.
\end{proof}

\Cref{new-criterion} is a generalisation of  \cite[Theorem~2]{deficiency} which we restate
adapted to our context in \Cref{cor:deficiency}.

\begin{cor}{\cite[Theorem~2]{deficiency}}  \label{cor:deficiency}\newline
Let $\Gamma$ be a finite index subgroup of $\SL(2,\ZZ)$.
Suppose there exist matrices $C_1, C_2 \in \SL(2,\ZZ)$ and $m_1, m_1', m_2, m_2' \in \NN$
with
\[\Gamma \; \ni \; C_1T^{m_1}C_1^{-1}, C_1T'^{\,m_1'}C_1^{-1} \mbox{ and } \Gamma \; \ni \; C_2T^{m_2}C_2^{-1}, C_2T'^{\,m_2'}C_2^{-1}, \]
such that $\gcd(m_1m_1',m_2m_2') = 1$.
Then $\Gamma$ is a totally non-congruence group.
\end{cor}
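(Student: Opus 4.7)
The plan is to reduce the corollary directly to \Cref{new-criterion} by producing, for each prime $p$, matrices $A_1, A_2 \in \SL(2,\ZZ)$ that verify hypotheses A) and B). The coprimality assumption $\gcd(m_1 m_1', m_2 m_2') = 1$ tells us that no prime divides both products, so for any fixed $p$ we may assume, after possibly swapping the roles of the two pairs $(C_1, m_1, m_1')$ and $(C_2, m_2, m_2')$, that $p$ divides neither $m_1$ nor $m_1'$.

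For such a prime the natural choice is $A_1 := C_1$ and $A_2 := C_1 S$. The motivation is that conjugation by $S$ converts a power of $T$ into a power of $T'^{-1}$: a quick calculation from \eqref{defT} gives $STS^{-1} = T'^{-1}$. Consequently
\[
A_2 T^{m_1'} A_2^{-1} \;=\; C_1 (STS^{-1})^{m_1'} C_1^{-1} \;=\; C_1 T'^{-m_1'} C_1^{-1} \;=\; \bigl(C_1 T'^{m_1'} C_1^{-1}\bigr)^{-1} \in \Gamma,
\]
while $A_1 T^{m_1} A_1^{-1} = C_1 T^{m_1} C_1^{-1} \in \Gamma$ holds by assumption. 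Since neither $m_1$ nor $m_1'$ is divisible by $p$, hypothesis B) of \Cref{new-criterion} is satisfied with $m_2 := m_1'$.

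For hypothesis A), I observe that $A_1 e_1 = C_1 e_1$ and $A_2 e_1 = C_1 S e_1 = C_1 e_2$ are precisely the two columns of $C_1$. Because $\det(C_1) = 1$ is a unit modulo $p$, these two vectors form a basis of $(\ZZ/p\ZZ)^2$, so $A_1 e_1$ cannot coincide with any $\ZZ/p\ZZ$-scalar multiple of $A_2 e_1$. Both conditions of \Cref{new-criterion} are therefore met, and the corollary follows. The only subtle point is the sign in $STS^{-1} = T'^{-1}$, which forces the use of the inverse element $\bigl(C_1 T'^{m_1'} C_1^{-1}\bigr)^{-1}$ to absorb the resulting minus sign; beyond this bookkeeping there is no genuine obstacle.
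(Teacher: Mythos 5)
Your proof is correct and takes essentially the same route as the paper: for each prime $p$ the paper likewise selects the pair with $p\nmid m_im_i'$ and sets $A_1 = C_i$, $A_2 = C_iS$, checking hypothesis A) exactly as you do via the invertibility of $C_i$ modulo $p$. Your explicit verification of hypothesis B) through $STS^{-1} = T'^{-1}$ and passing to the inverse $\bigl(C_1T'^{\,m_1'}C_1^{-1}\bigr)^{-1} \in \Gamma$ merely spells out a step the paper's proof leaves implicit.
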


\begin{proof}
We show that the assumptions of \Cref{new-criterion} are fulfilled.
Let $p$  be prime. If $p$ does not divide $m_1m_1'$, then we may choose $A_1 = C_1$, $A_2 = C_1S$.
Denote $e_1 = {1 \choose 0}$ and $e_2 = {0 \choose 1}$. Since $e_1 \neq j\cdot e_2 \mod p$ 
for all $j \in \NN$, we have
that $A_1e_1 \neq j\cdot A_2e_1 = j\cdot A_1e_2 \mod p$. Thus in this case the assumptions are satisfied.
If $p$ divides $m_1m_1'$, then it does not divide $m_2m_2'$ and we can use the same   
arguments with $C_2$ instead of $C_1$.
\end{proof}

\section{Nice one-cylinder origamis}

In this section we give explicit examples for one-cylinder origamis
in each stratum. The following examples will provide building blocks for them.

\begin{ex}\label{example:building_blocks}
In the following we construct special one-cylinder origamis 
in $\HHH(\alpha)$ with $\alpha$ even
and in $\HHH(\alpha_1,\alpha_2)$ with $\alpha_1, \alpha_2$ odd.
\begin{itemize}
\item[i)] {\em A family of  origamis in $\HHH(\alpha)$}:\\[2mm]
  Let $\alpha = 2k$ be an even number. Define the origami $O(\alpha)$ 
  with $N = 3k+1 = \frac{3}{2}\alpha + 1$ squares 
  by the following permutations (cf.~Figure~\ref{origami1}):
  \[\begin{array}{lcl}
  \sigma_a(\alpha) &=& (1, \ldots, N),\\
  \sigma_b(\alpha) &=& (1,2,3)\circ (4,5,6) \circ \ldots \circ (3(k-1)+1, 3(k-1)+2,3(k-1)+3)
  \end{array}\]
  Observe that we obtain the commutator
  \[[\sigma_b^{-1},\sigma_a^{-1}] = (3, 6, 9, \ldots, 3k-1, 3k, 3k-1, 3k-4, 3k-7, \ldots, 8,5,2,N)\]
  In particular the commutator consists of one cycle of length $2k+1$. Hence the origami has one singularity
  with cone angle $(2k+1)\cdot 2\pi = (\alpha+1)\cdot 2\pi$ and thus lies in $\HHH(\alpha)$.\\[3mm]
  \begin{figure}
    \begin{center}
      \hspace*{1mm}
      \begin{xy}
        <.9cm,0cm>:
        (0,0)*{\OriSquare{$1$}{}{$a_1$}{$x$}{$c_1$}};
        (1,0)*{\OriSquare{$2$}{}{$b_1$}{}{$a_1$}};
        (2,0)*{\OriSquare{$3$}{}{$c_1$}{}{$b_1$}};
        (3,0)*{\OriSquare{$4$}{}{$a_2$}{}{$c_2$}};
        (4,0)*{\OriSquare{$5$}{}{$b_2$}{}{$a_2$}};
        (5,0)*{\OriSquare{$6$}{}{$c_2$}{}{$b_2$}};
        (6,0)*{\OriSquare{$\cdots$}{}{$\cdots$}{}{$\cdots$}};
        (7,0)*{\OriSquare{$\cdots$}{}{$\cdots$}{}{$\cdots$}};
        (8,0)*{\OriSquare{$\cdots$}{}{$\cdots$}{}{$\cdots$}};
        (9,0)*{\OriSquare{$3k-2$}{}{$a_k$}{}{$c_k$}};
        (10,0)*{\OriSquare{$3k-1$}{}{$b_k$}{}{$a_k$}};
        (11,0)*{\OriSquare{$3k$}{}{$c_k$}{}{$b_k$}};
        (12,0)*{\OriSquare{$3k+1$}{$x$}{$d$}{}{$d$}};
        (-.5,.5)*{\bullet};
        (1.5,.5)*{\bullet};
        (2.5,.5)*{\bullet};
        (4.5,.5)*{\bullet};
        (5.5,.5)*{\bullet};
        (7.5,.5)*{\bullet};
        (8.5,.5)*{\bullet};
        (10.5,.5)*{\bullet};
        (11.5,.5)*{\bullet};
        (12.5,.5)*{\bullet};
        (-.5,-.5)*{\bullet};
        (.5,-.5)*{\bullet};
        (2.5,-.5)*{\bullet};
        (3.5,-.5)*{\bullet};
        (5.5,-.5)*{\bullet};
        (6.5,-.5)*{\bullet};
        (8.5,-.5)*{\bullet};
        (9.5,-.5)*{\bullet};
        (11.5,-.5)*{\bullet};
        (12.5,-.5)*{\bullet};
      \end{xy}
      \caption{The origami $O(\alpha)$ from \Cref{example:building_blocks} in $\HHH(\alpha)$}
      \label{origami1}
    \end{center}
  \end{figure}
  We now define for arbitrary $l \geq 1$ the one-cylinder origami $O(\alpha;l)$  in $\HHH(\alpha)$
  as a deformation of $O(\alpha)$ in the following way (cf.~Figure~\ref{origami2}). $O(\alpha;l)$ has 
  $N' = N + l - 1 = \frac{3}{2}\alpha + l$ squares and is defined by the permutations
  \[
  \begin{array}{lcl}
    \sigma_{a}(\alpha;l) &=& (1,\ldots, N'),\\
    \sigma_{b}(\alpha;l) &=& \sigma_b(\alpha) = (1,2,3)\circ (4,5,6) \circ \ldots \circ (3(k-1)+1, 3(k-1)+2,3(k-1)+3)
  \end{array}
  \]
  Observe that  $O(\alpha;l)$ has again one singularity and lies in $\HHH(\alpha)$.\\

\begin{figure}
    \begin{center}
      \hspace*{1mm}
      \begin{xy}
        <.9cm,0cm>:
        (0,0)*{\OriSquare{$1$}{}{$a_1$}{$x$}{$c_1$}};
        (1,0)*{\OriSquare{$2$}{}{$b_1$}{}{$a_1$}};
        (2,0)*{\OriSquare{$3$}{}{$c_1$}{}{$b_1$}};
        (3,0)*{\OriSquare{$4$}{}{$a_2$}{}{$c_2$}};
        (4,0)*{\OriSquare{$5$}{}{$b_2$}{}{$a_2$}};
        (5,0)*{\OriSquare{$6$}{}{$c_2$}{}{$b_2$}};
        (6,0)*{\OriSquare{$\cdots$}{}{$\cdots$}{}{$\cdots$}};
        (7,0)*{\OriSquare{$\cdots$}{}{$\cdots$}{}{$\cdots$}};
        (8,0)*{\OriSquare{$\cdots$}{}{$\cdots$}{}{$\cdots$}};
        (9,0)*{\OriSquare{$3k-2$}{}{$a_k$}{}{$c_k$}};
        (10,0)*{\OriSquare{$3k-1$}{}{$b_k$}{}{$a_k$}};
        (11,0)*{\OriSquare{$3k$}{}{$c_k$}{}{$b_k$}};
        (12,0)*{\OriSquare{$3k+1$}{}{$d_1$}{}{$d_1$}};
        (13,0)*{\OriSquare{$\ldots$}{}{$\cdots$}{}{$\cdots$}};
        (14,0)*{\OriSquare{$3k+l$}{$x$}{$d_l$}{}{$d_l$}};
        (-.5,.5)*{\bullet};
        (1.5,.5)*{\bullet};
        (2.5,.5)*{\bullet};
        (4.5,.5)*{\bullet};
        (5.5,.5)*{\bullet};
        (7.5,.5)*{\bullet};
        (8.5,.5)*{\bullet};
        (10.5,.5)*{\bullet};
        (11.5,.5)*{\bullet};
        (14.5,.5)*{\bullet};
        (-.5,-.5)*{\bullet};
        (.5,-.5)*{\bullet};
        (2.5,-.5)*{\bullet};
        (3.5,-.5)*{\bullet};
        (5.5,-.5)*{\bullet};
        (6.5,-.5)*{\bullet};
        (8.5,-.5)*{\bullet};
        (9.5,-.5)*{\bullet};
        (11.5,-.5)*{\bullet};
        (14.5,-.5)*{\bullet};
      \end{xy}
      \caption{The origami $O(\alpha;l)$ from \Cref{example:building_blocks} in $\HHH(\alpha)$}
      \label{origami2}
    \end{center}
  \end{figure}

\item[ii)]  {\em A family of  origamis in $\HHH(\alpha_1,\alpha_2)$} (cf.~\Cref{origami3}):\\[2mm]
  Let $\alpha_1 = 2k_1+1$, $\alpha_2 = 2k_2+1$ be odd numbers. 
  Define the origami $O(\alpha_1,\alpha_2)$ 
  with $N = 3(k_1+k_2) + 6 = \frac{3}{2}(\alpha_1 + \alpha_2) + 3$ squares
  by the following permutations: 
  \[
  \begin{array}[t]{c}
    \sigma_a(\alpha_1,\alpha_2) = (1, \ldots, N),\quad
    \sigma_b(\alpha_1,\alpha_2) = \sigma_1 \circ \sigma_2 \circ \sigma_3,
  \end{array}
  \]
  \mbox{where}
  $\begin{array}[t]{lcl}
    \sigma_1 &=& (1,2,3) \circ (4,5,6) \circ \ldots \circ (3k_1-2,3k_1-1,3k_1),\\ 
    \sigma_2 &=& (3k_1+1,3k_1+5,3k_1+2,3k_1+3, 3k_1+4),\\
    \sigma_3 &=& (3k_1+6,3k_1+7,3k_1+8)\circ (3k_1+9,3k_1+10,3k_1+11)\circ\ldots\\ 
    & & \hspace*{5mm}\ldots \circ (3(k_1+k_2)+3,3(k_1+k_2)+4,3(k_1+k_2)+5)
  \end{array}$\\[3mm]
  In this case we obtain  for the commutator:
  \[[\sigma_b^{-1},\sigma_a^{-1}] = 
        \begin{array}[t]{l}
             (3,6,9, \ldots,3k_1,3k_1+3, 3k_1-1, 3k_1-4, 3k_1 - 7, \ldots, 5,2,N) \circ \\
              (3k_1+1, 3k_1+5, 3k_1+8, 3k_1+11, \ldots, N-1,\\
               \hspace*{6mm} N-2, N-5, N-8, \ldots, 3k_1 + 5 + 2) 
        \end{array} 
  \]
  In particular it consists of two cycles of length $2k_1+2 = \alpha_1+1$
  and $2k_2+2 = \alpha_2+1$. Hence $O(\alpha_1,\alpha_2)$ lies in $\HHH(\alpha_1,\alpha_2)$.
  Similarly as in i), we define for $l \geq 1$ the origami $O(\alpha_1,\alpha_2;l)$ in $\HHH(\alpha_1,\alpha_2)$ with
  $N' = 3(k_1+k_2) + 5 + l =  \frac{3}{2}(\alpha_1 + \alpha_2) + 2  + l$ squares by the two permutations
   (cf.~\Cref{origami3})
  \[
  \begin{array}{lcl}
    \sigma_{a}(\alpha_1,\alpha_2;l) &=& (1,\ldots, N'),\\
    \sigma_{b}(\alpha_1,\alpha_2;l) &=&  \sigma_{b}(\alpha_1,\alpha_2) 
  \end{array}
  \]

  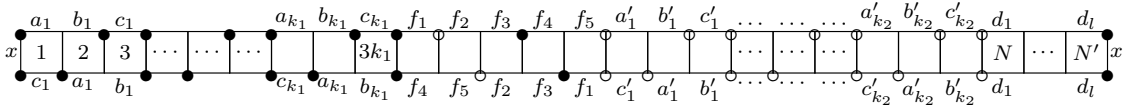
\begin{figure}

    \hspace*{10mm}
    \begin{center}
      \begin{xy}
        \begin{xy}
          <5.5mm,0mm>:
          (0,0)*{\OriSquare{$1$}{}{$a_1$}{$x$}{$c_1$}};
          (1,0)*{\OriSquare{$2$}{}{$b_1$}{}{$a_1$}};
          (2,0)*{\OriSquare{$3$}{}{$c_1$}{}{$b_1$}};
          (3,0)*{\OriSquare{$\cdots$}{}{}{}{}};
          (4,0)*{\OriSquare{$\cdots$}{}{}{}{}};
          (5,0)*{\OriSquare{$\cdots$}{}{}{}{}};
          (6,0)*{\OriSquare{}{}{$a_{k_1}$}{}{$c_{k_1}$}};
          (7,0)*{\OriSquare{}{}{$b_{k_1}$}{}{$a_{k_1}$}};
          (8,0)*{\OriSquare{$3k_1$}{}{$c_{k_1}$}{}{$b_{k_1}$}};
          (9,0)*{\OriSquare{}{}{$f_1$}{}{$f_4$}};
          (10,0)*{\OriSquare{}{}{$f_2$}{}{$f_5$}};
          (11,0)*{\OriSquare{}{}{$f_3$}{}{$f_2$}};
          (12,0)*{\OriSquare{}{}{$f_4$}{}{$f_3$}};
          (13,0)*{\OriSquare{}{}{$f_5$}{}{$f_1$}};
          (14,0)*{\OriSquare{}{}{$a'_{1}$}{}{$c'_{1}$}};
          (15,0)*{\OriSquare{}{}{$b'_{1}$}{}{$a'_{1}$}};
          (16,0)*{\OriSquare{}{}{$c'_{1}$}{}{$b'_{1}$}};
          (17,0)*{\OriSquare{$\cdots$}{}{$\cdots$}{}{$\cdots$}};
          (18,0)*{\OriSquare{$\cdots$}{}{$\cdots$}{}{$\cdots$}};
          (19,0)*{\OriSquare{$\cdots$}{}{$\cdots$}{}{$\cdots$}};
          (20,0)*{\OriSquare{}{}{$a'_{k_2}$}{}{$c'_{k_2}$}};
          (21,0)*{\OriSquare{}{}{$b'_{k_2}$}{}{$a'_{k_2}$}};
          (22,0)*{\OriSquare{}{}{$c'_{k_2}$}{}{$b'_{k_2}$}};
          (23,0)*{\OriSquare{$N$}{}{$d_1$}{}{$d_1$}};
          (24,0)*{\OriSquare{\ldots}{}{}{}{}};       
          (25,0)*{\OriSquare{$N'$}{$x$}{$d_{l}$}{}{$d_{l}$}};
          (-.5,.5)*{\bullet};(1.5,.5)*{\bullet};(2.5,.5)*{\bullet};(4.5,.5)*{\bullet};(5.5,.5)*{\bullet};
          (7.5,.5)*{\bullet};(8.5,.5)*{\bullet};(11.5,.5)*{\bullet};(25.5,.5)*{\bullet};
          (-.5,-.5)*{\bullet}; (.5,-.5)*{\bullet};(2.5,-.5)*{\bullet};(3.5,-.5)*{\bullet};(5.5,-.5)*{\bullet};
          (6.5,-.5)*{\bullet};(8.5,-.5)*{\bullet};(12.5,-.5)*{\bullet};(25.5,-.5)*{\bullet};
          (13.5,.5)*{\circ}; (15.5,.5)*{\circ}; (16.5,.5)*{\circ}; (18.5,.5)*{\circ}; (19.5,.5)*{\circ};
          (21.5,.5)*{\circ}; (22.5,.5)*{\circ};
          (13.5,-.5)*{\circ}; (14.5,-.5)*{\circ}; (16.5,-.5)*{\circ};(17.5,-.5)*{\circ};(19.5,-.5)*{\circ};
          (20.5,-.5)*{\circ};(22.5,-.5)*{\circ}; 
          (9.5,.5)*{\circ}; (10.5,-.5)*{\circ};
        \end{xy}
      \end{xy}

      \vspace*{3mm}
      \caption{The origami $O(\alpha_1,\alpha_2;l)$ from 
        \Cref{example:building_blocks}}
      \label{origami3}
    \end{center}
  \end{figure}
\end{itemize}
\end{ex}


We may now construct one-cylinder origamis in a general stratum $\HHH(\alpha_1,\ldots, \alpha_k)$  
by cutting and pasting the origamis from \Cref{example:building_blocks} as described in the following.
We assume that the numbers $\alpha_1 $, $\ldots$,  $\alpha_k$ are ordered such that the first part
consists of even numbers and the second part of odd numbers.
Recall that $\alpha_1+1$, \ldots, $\alpha_k+1$ are the cycle lengths of the commutator $[\sigma_b^{-1},\sigma_a^{-1}]$.
Since the commutator is an even permutation, the number of odd  $\alpha_i$ is even.

\begin{lem}\label{theorigami} 
Let $\alpha_1 \ldots, \alpha_p$ be even, $\alpha_{p+1}, \ldots, \alpha_{p+2q}$ be odd numbers.
Let further $l$ be a natural number. 
We obtain a one-cylinder origami $O$ in $\HHH(\alpha_1, \ldots, \alpha_{p+2q})$ with 
\[L = \frac{3}{2}(\alpha_1 + \ldots +\alpha_{p+2q}) + p + 3q+l-1\] squares as follows (cf. \Cref{exH2413}). 
If $q \neq 0$,  we take the origamis 
\[O(\alpha_1), \ldots, O(\alpha_p), O(\alpha_{p+1},\alpha_{p+2}), \ldots, 
  O(\alpha_{p+2q-3},\alpha_{p+2q-2}) \mbox{ and } O(\alpha_{p+2q-1},\alpha_{p+2q};l)\] defined in  \Cref{example:building_blocks}. 
We cut them along the left vertical edge of their first square which
is equal to the right vertical edge of their last square. We then glue them in the stated order along these slits.
If $q = 0$, we take the origamis $O(\alpha_1), \ldots, O(\alpha_{p-1}), O(\alpha_{p};l)$ and do the same procedure.\\
This means the origami $O$ is defined by the 
two permutations $(\sigma_a,\sigma_b)$ given as follows: If $q \neq 0$, we have
\begin{equation}\label{thepermutations}
  \begin{array}{lcl}
  \sigma_a &=& (1, \ldots, L), \quad\\
  \sigma_b &=& 
      \begin{array}[t]{lcl}
                \sigmahat_b(\alpha_1) &\circ& \ldots \circ \sigmahat_b(\alpha_p)\circ \sigmahat(\alpha_{p+1},\alpha_{p+2}) 
                  \circ \ldots \circ  \sigmahat(\alpha_{p+2q-3},\alpha_{p+2q-2})\\ 
                                      &\circ& \sigmahat(\alpha_{p+2q-1},\alpha_{p+2q};l)
      \end{array}
  \end{array}
\end{equation}
Here $\sigmahat_b(\alpha_i)$, $\sigmahat_b(\alpha_{i},\alpha_{i+1})$ and $\sigmahat(\alpha_{p+2q-1},\alpha_{p+2q};l)$
are conjugates of $\sigma_b(\alpha_i)$, $\sigma_b(\alpha_{i},\alpha_{i+1})$ and $\sigma(\alpha_{p+2q-1},\alpha_{p+2q};l)$
which shift the labels of $O(\alpha_i)$, $O(\alpha_i,\alpha_{i+1})$ and $O(\alpha_{p+2q-1},\alpha_{p+2q};l)$
by the sum of the lengths of the origamis before them.
More precisely, we define these permutations in the following way. Let $s_i = \frac{3}{2}\alpha_i + 1$ if $i \leq p$
and $s_i = \frac{3}{2}\alpha_i + \frac{3}{2}$ if $p+1 \leq i \leq p+2q-1$. Then $O(\alpha_i)$
is of length $s_i$ for $i \leq p$ and $O(\alpha_i,\alpha_{i+1})$ is of length $s_{i} + s_{i+1}$ for $p+1 \leq i \leq p+2q-3$. 
Define $S_i = \sum_{j = 1}^{i-1}s_j$. Let furthermore $\sh(a): \NN \to \NN$ be the map $n \mapsto n + a$.
Then 
\[\begin{array}{lcl}
 \sigmahat_b(\alpha_i) &=& \sh(S_i)\circ \sigma_b(\alpha_i)\circ \sh(S_i)^{-1},\\ 
 \sigmahat_b(\alpha_{i},\alpha_{i+1}) &=& \sh(S_i)\circ \sigma_b(\alpha_i,\alpha_{i+1}) \circ \sh(S_i)^{-1}, \mbox{ and }\\
  \sigmahat_b(\alpha_{p+2q-1}, \alpha_{p+2q};l) &=&  \sh(S_{p+2q-1})\circ \sigma_b(\alpha_{p+2q-1},\alpha_{p+2q};l) \circ \sh(S_{p+2q-1})^{-1}
  \end{array}
\]
If $q = 0$, we similarly have
\[\sigma_a = (1,\ldots,L) \mbox{ and } \sigma_b =  \sigmahat_b(\alpha_1) \circ \ldots \circ \sigmahat_b(\alpha_{p-1}) 
\circ \sigmahat_b(\alpha_p;l),\] with 
 $\sigmahat_b(\alpha_1)$, \ldots, $\sigmahat_b(\alpha_{p-1})$ and $\sigmahat_b(\alpha_p;l)$ 
defined as conjugates of  $\sigma_b(\alpha_1)$, \ldots, $\sigma_b(\alpha_{p-1})$ and $\sigma_b(\alpha_p;l)$ 
with the suitable shifts similarly as in the case $q \neq 0$.\\
\end{lem}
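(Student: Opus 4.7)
The proof naturally splits into four parts. Since $\sigma_a$ is the single $L$-cycle $(1,\ldots,L)$, the subgroup $\langle \sigma_a,\sigma_b\rangle$ is already transitive on $\{1,\ldots,L\}$, giving connectedness and the one-cylinder property in the horizontal direction. The square count
\[L \;=\; \tfrac{3}{2}(\alpha_1+\cdots+\alpha_{p+2q}) + p + 3q + l - 1\]
follows immediately by summing the sizes of the individual building blocks from \Cref{example:building_blocks}: $\tfrac{3}{2}\alpha_i+1$ for each $O(\alpha_i)$, $\tfrac{3}{2}(\alpha_i+\alpha_{i+1})+3$ for each $O(\alpha_i,\alpha_{i+1})$, and $\tfrac{3}{2}(\alpha_{p+2q-1}+\alpha_{p+2q})+2+l$ for the final block.

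The substantive content is showing $O\in\HHH(\alpha_1,\ldots,\alpha_{p+2q})$, i.e. that $[\sigma_b^{-1},\sigma_a^{-1}]$ has exactly $p+2q$ non-trivial cycles of lengths $\alpha_1+1,\ldots,\alpha_{p+2q}+1$. The plan is to compare, square by square, the composite commutator with the commutators of the building blocks, whose explicit cycle structures are recorded in \Cref{example:building_blocks}. Let $\sigma_a^{\mathrm{loc}}$ denote the disjoint union of the block-local cycles $\sigma_a^{(i)}$ (each cycling the squares of block $i$ in place). By construction $\sigma_b$ is literally the disjoint union (i.e. commuting product) of the shifted block-local $\sigma_b^{(i)}$'s, so $\sigma_b$ agrees with $\sigma_b^{\mathrm{loc}}$; and $\sigma_a$ agrees with $\sigma_a^{\mathrm{loc}}$ everywhere except at the junction squares $N_i=S_{i+1}$, the last square of each block, where in the composite $\sigma_a$ sends $N_i$ to the first square $S_{i+1}+1$ of the next block rather than wrapping back to $S_i+1$. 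A direct four-step evaluation of $\sigma_b^{-1}\sigma_a^{-1}\sigma_b\sigma_a$ then shows that at every square whose commutator-orbit never visits a junction the composite commutator agrees with its building-block counterpart, while at the junctions the effect is precisely that the unique cycle of each building-block commutator through $N_i$ has the label $N_i$ replaced by $N_{i-1}$ (with cyclic indexing over the blocks). This is a length-preserving relabeling.

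For the final block $O(\alpha_{p+2q-1},\alpha_{p+2q};l)$ the $l$ trailing $d$-squares $y_1,\ldots,y_l$ require a small separate check: the interior ones $y_1,\ldots,y_{l-1}$ satisfy $\sigma_b(y_j)=y_j$ and $\sigma_b(y_{j+1})=y_{j+1}$, so the four-step commutator evaluation shows they are fixed by $[\sigma_b^{-1},\sigma_a^{-1}]$, while only $y_l=L$ plays the role in the substitution above. Assembling the pieces yields exactly $p+2q$ non-trivial cycles of the required lengths and confirms $O\in\HHH(\alpha_1,\ldots,\alpha_{p+2q})$. The main obstacle is the combinatorial bookkeeping for the paired blocks $O(\alpha_i,\alpha_{i+1})$, where one must invoke the explicit commutator formula in \Cref{example:building_blocks}\,(ii) to verify that exactly one of the two commutator cycles passes through the last square of the block, so that only that one cycle is affected by the $N_i\mapsto N_{i-1}$ substitution; the case $q=0$ (no paired blocks) is handled by the same argument restricted to $O(\alpha_i)$-type blocks.
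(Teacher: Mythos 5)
Your proposal is correct and takes essentially the same route as the paper: the paper's own proof also argues block by block, asserting that each building block $O(\alpha_i)$, $O(\alpha_i,\alpha_{i+1})$, $O(\alpha_{p+2q-1},\alpha_{p+2q};l)$ contributes its singularities (``you can directly check'') and then sums the block sizes to obtain $L$, exactly as you do. Your junction analysis---that in the composite commutator the unique block-local cycle through the last square $N_i$ of each block has $N_i$ replaced by $N_{i-1}$ (which is fixed by $\sigma_b$), with the trailing $d$-squares of the final block checked separately---is correct (it can be verified e.g.\ on two copies of $O(2)$, where the local cycles $(2,4,3)$ and $(6,8,7)$ become $(2,8,3)$ and $(6,4,7)$) and supplies the explicit mechanism the paper leaves to the reader.
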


Figure~\ref{exH2413} shows the origami in $\HHH(2,4,1,3)$ obtained by this construction with
$l = 2$. 

\begin{proof}
Assume first that $q \neq 0$. You can directly check  from the definition of $O$ and \Cref{example:building_blocks} that each 
building block $O(\alpha_i)$ contributes one singularity of order $\alpha_i$ to the surface. 
Furthermore, each $O(\alpha_i,\alpha_{i+1})$ contributes two singularities of order
$\alpha_i$ and $\alpha_{i+1}$. $O(\alpha_{p+2q-1}, \alpha_{p+2q};l)$ also contributes 
two singularities of order $\alpha_{p+2q-1}$ and $\alpha_{p+2q}$. 
Finally, the numbers of squares of the origamis $O(\alpha_1)$, \ldots, 
$O(\alpha_{p+2q-1},\alpha_{p+2q};l)$
add up to the number $L$ of squares of the constructed origami $O$. Thus we obtain:
\[
\begin{array}{lcl}
  L &=& \frac{3}{2}\alpha_1 + 1 + \ldots + \frac{3}{2}\alpha_p + 1 \\[1mm] 
    &&   + \frac32(\alpha_{p+1} + \alpha_{p+2}) + 3 + \ldots + \frac32(\alpha_{p+2q-1} + \alpha_{p+2q}) + 3 + l-1\\[1mm]
    &=& \frac32(\alpha_1 +  \ldots + \alpha_{p+2q}) + p + 3q + l-1 
\end{array}
\]
The proof works similarly if $q = 0$.
\end{proof}

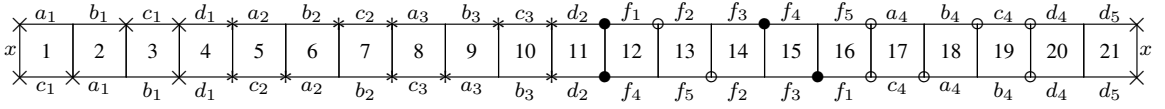
\begin{figure}
  \label{ex3}
    \hspace*{1mm}
    \begin{center}      
      \begin{xy}
        <0.7cm,0cm>:
        (0,0)*{\OriSquare{1}{}{$a_1$}{$x$}{$c_1$}};
        (1,0)*{\OriSquare{2}{}{$b_1$}{}{$a_1$}};
        (2,0)*{\OriSquare{3}{}{$c_1$}{}{$b_1$}};
        (3,0)*{\OriSquare{4}{}{$d_1$}{}{$d_1$}};
        (4,0)*{\OriSquare{5}{}{$a_2$}{}{$c_2$}};
        (5,0)*{\OriSquare{6}{}{$b_2$}{}{$a_2$}};
        (6,0)*{\OriSquare{7}{}{$c_2$}{}{$b_2$}};
        (7,0)*{\OriSquare{8}{}{$a_3$}{}{$c_3$}};
        (8,0)*{\OriSquare{9}{}{$b_3$}{}{$a_3$}};
        (9,0)*{\OriSquare{10}{}{$c_3$}{}{$b_3$}};
        (10,0)*{\OriSquare{11}{}{$d_2$}{}{$d_2$}};
        (11,0)*{\OriSquare{12}{}{$f_1$}{}{$f_4$}};
        (12,0)*{\OriSquare{13}{}{$f_2$}{}{$f_5$}};
        (13,0)*{\OriSquare{14}{}{$f_3$}{}{$f_2$}};
        (14,0)*{\OriSquare{15}{}{$f_4$}{}{$f_3$}};
        (15,0)*{\OriSquare{16}{}{$f_5$}{}{$f_1$}};
        (16,0)*{\OriSquare{17}{}{$a_4$}{}{$c_4$}};
        (17,0)*{\OriSquare{18}{}{$b_4$}{}{$a_4$}};
        (18,0)*{\OriSquare{19}{}{$c_4$}{}{$b_4$}};
        (19,0)*{\OriSquare{20}{}{$d_4$}{}{$d_4$}};
        (20,0)*{\OriSquare{21}{$x$}{$d_5$}{}{$d_5$}};
        (-.5,.55)*{\times}; (1.5,.55)*{\times}; (2.5,.55)*{\times};  (20.5,.55)*{\times};   
        (-.5,-.44)*{\times};  (.5,-.44)*{\times};  (2.5,-.44)*{\times}; (20.5,-.44)*{\times};
        (3.5,.56)*{\ast}; (5.5,.56)*{\ast}; (6.5,.56)*{\ast};
        (8.5,.56)*{\ast};(9.5,.56)*{\ast};
        (3.5,-.44)*{\ast}; (4.5,-.44)*{\ast}; (6.5,-.44)*{\ast}; 
        (7.5,-.44)*{\ast}; (9.5,-.44)*{\ast}; 
        (10.5,.56)*{\bullet};(13.5,.56)*{\bullet};
        (10.5,-.44)*{\bullet}; (14.5,-.44)*{\bullet};
        (11.5,.56)*{\circ}; (15.5,.56)*{\circ}; (17.5,.56)*{\circ}; (18.5,.56)*{\circ};
        (12.5,-.44)*{\circ}; (15.5,-.44)*{\circ};  (16.5,-.44)*{\circ}; (18.5,-.44)*{\circ}; 
      \end{xy}
      \caption{The origami in $\HHH(2,4,1,3)$ with $l = 2$ 
        obtained from the construction 
        in \Cref{theorigami}}\label{exH2413}
    \end{center}
\end{figure}

In the following we consider cylinder decompositions in different directions of the origamis constructed in 
\Cref{theorigami}. Based on this we obtain  parabolic elements in the Veech groups of these origamis. 

\begin{lem}\label{lem:parabolics}
Let $\Gamma$ be the Veech group of the origami $O = O(l)$ 
with $L = \frac{3}{2}(\alpha_1 + \ldots +\alpha_{p+2q}) + p + 3q+l-1$  squares
constructed in \Cref{theorigami}.
Then $\Gamma$ contains the following parabolic matrices:
\[T^L, T'^{15}, \mbox{ and } T''^{2(L-4q)}, \mbox{ with } T \mbox{ and } T' \mbox{ defined in (\ref{defT})} \mbox{ and }T'' = T'TT'^{-1}\]
\end{lem}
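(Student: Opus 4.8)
The plan is to treat the three parabolic elements separately, since each one encodes the cylinder decomposition of $O$ in one direction: $T^{L}$ the horizontal one, $T'^{15}$ the vertical one, and $T''^{2(L-4q)}$ the diagonal one in direction ${1\choose1}$. For the first two I would simply invoke the second of the two facts recalled at the end of the Preliminaries. Since $\sigma_a=(1,\dots,L)$ is a single $L$-cycle, $O$ has exactly one horizontal cylinder, of height one and length $L$, so $T^{L}\in\Gamma$. For the vertical direction the cylinder lengths are the cycle lengths of $\sigma_b$; by construction $\sigma_b$ is a product of $3$-cycles (from the even blocks and from $\sigma_1,\sigma_3$ in the odd blocks), $5$-cycles (the factors $\sigma_2$), and fixed points (the squares of the $d$-tail). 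Hence every vertical cylinder length lies in $\{1,3,5\}$, and $15=\lcm(1,3,5)$ is a common multiple, giving $T'^{15}\in\Gamma$ (this also covers the case $q=0$, where only lengths $1,3$ occur).

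The diagonal element is the only real work. First I would reduce it to a horizontal statement: $T''=T'TT'^{-1}$ fixes the direction ${1\choose1}$, and $T'^{-1}{1\choose1}={1\choose0}$, so conjugation by $T'^{-1}$ turns this direction into the horizontal one. Using that Veech groups transform by conjugation under the $\SL(2,\ZZ)$-action and that $T'^{-1}T''T'=T$, one obtains $T''^{\,n}\in\Gamma(O)$ if and only if $T^{n}\in\Gamma(T'^{-1}\cdot O)$. Next I would describe the decomposition of $O$ in direction ${1\choose1}$ combinatorially: following a geodesic of slope $1$ shows that its first return to the bottom edges of the squares sends the square $i$ to $\sigma_b\sigma_a(i)$, so the diagonal cylinders are in bijection with the cycles of $\sigma_b\sigma_a$, the cylinder of an $\ell$-cycle carrying a single Dehn twist with derivative $T''^{\,\ell}$ (verified by diagonalising $T''$, or equivalently by noting that after applying $T'^{-1}$ this cylinder has height one and circumference $\ell$). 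By the cylinder fact this yields $T''^{\,n}\in\Gamma$ for every common multiple $n$ of the cycle lengths of $\sigma_b\sigma_a$.

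It then remains to determine the cycle type of $\sigma_b\sigma_a$, which I expect to be the main obstacle. Writing $\sigma_b\sigma_a(i)=\sigma_b(i+1)$ with indices taken modulo $L$, a local computation at each factor $\sigma_2=(m+1,m+5,m+2,m+3,m+4)$ of an odd block shows that the four squares $m+1,\dots,m+4$ split off as two transpositions $(m+1,m+3)$ and $(m+2,m+4)$, while $m+5$ is carried into the rest of the surface; one also checks that these $4q$ squares form a $\sigma_b\sigma_a$-invariant set. The bulk of the argument is then to trace the remaining $L-4q$ squares through the even blocks, the $\sigma_1,\sigma_3$-parts and $d$-tails of the odd blocks, and the junctions between consecutive blocks, verifying that they constitute a single cycle, necessarily of length $L-4q$. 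Granting this, $\sigma_b\sigma_a$ consists of one $(L-4q)$-cycle together with $2q$ transpositions, so every cycle length divides $\lcm(L-4q,2)$, which in turn divides $2(L-4q)$; hence $T''^{\,2(L-4q)}\in\Gamma$, which accounts for the factor $2$ in the exponent and completes the proof. The delicate point is purely the bookkeeping of this single-cycle claim across the block junctions; the transposition count and the reduction steps are routine.
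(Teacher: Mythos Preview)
Your proposal is correct and follows essentially the same route as the paper: the horizontal and vertical parabolics come directly from the cycle types of $\sigma_a$ and $\sigma_b$, and for the diagonal element the paper likewise passes to $T'^{-1}\cdot O=(\sigma_b\sigma_a,\sigma_b)$ and shows that $\sigma_b\sigma_a$ consists of $2q$ transpositions $(m+1,m+3),(m+2,m+4)$ coming from the $\sigma_2$-blocks together with one long cycle of length $L-4q$. The paper carries out the block-by-block trace of the long cycle explicitly, which is exactly the bookkeeping you flagged as the remaining work.
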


\begin{proof}
It follows from  its definition that $O$ consists of one horizontal
cylinder which has length $L$ and height 1. Thus the Veech group 
contains the matrix $T^L$. Furthermore,  since all cycles of
$\sigma_b$ are of length 1, 3 or 5, we have that  $O$ decomposes into 
vertical cylinders of height 1 and length 1, 3 or 5. Hence $T'^{15}$
is contained in $\Gamma$. Finally, the origami $T'^{-1}\cdot O$
is given by the two permutations $(\sigma_b\sigma_a,\sigma_b)$.
We will show below that $\sigma_b\sigma_a$ consists of one cycle
of length $L-4q$ and further cycles of length 2.
Hence $T'^{-1}\cdot O$ composes into horizontal cylinders
of length $L-4q$ and of length 2. Therefore
$T^{2(L-4q)} \in \Gamma(T'^{-1}O) = T'^{-1}\Gamma T'$ and thus
$T'T^{2(L-4q)}T'^{-1} = T''^{2(L-4q)}\in \Gamma$. This finishes the claim.\\
Let us now show that the permutation $\sigma_b\sigma_a$
is of the desired form.  We assume that $q \neq 0$. The case $q = 0$ 
works in the same way. Recall that $O$ consists of the origamis $O(\alpha_1)$, \ldots, $O(\alpha_p)$, 
$O(\alpha_{p+1},\alpha_{p+2})$, \ldots, $O(\alpha_{p+2q-3},\alpha_{p+2q-2})$, $O(\alpha_{p+2q-1},\alpha_{p+2q};l)$
which are glued in a row along slits. We label the squares of $O$ from left to right by
$1, \ldots, L \in \ZZ/L\ZZ$. Let us consider how the permutation $\sigma_b\sigma_a$
acts on the labels of the squares.\\
Recall the definition of $S_i$ and $s_i$ in \Cref{theorigami}.
The origamis $O(\alpha_i)$ are then of length $s_i$ and the origamis 
$O(\alpha_i,\alpha_{i+1})$ are of length $s_i + s_{i+1}$.
Let us consider the  squares belonging to 
the origami $O(\alpha_i)$ ($i \in \{1, \ldots, p\}$). 
The first square of the origami $O(\alpha_i)$ is labelled
by $S_i+1$ and the last one is labelled by $S_i + s_i$. 
Observe (cf.~\Cref{origami1}) that the permutation $\sigma_b\sigma_a$ acts in the following way:  
\[
  \begin{array}{l}
  S_i \mapsto  S_i + 2 \mapsto S_i + 1 \mapsto S_i + 3 \mapsto S_i + 5 \mapsto S_i + 4 \mapsto S_i + 6 \mapsto  \ldots\\ 
  \hspace*{1cm}\mapsto S_i + s_i - 2 \mapsto S_i + s_i - 3 \mapsto S_i + s_i - 1 \mapsto S_i + s_i = S_{i+1} 
  \end{array}
\]
In particular all squares of the origamis $O(\alpha_1)$, \ldots, $O(\alpha_{p})$,
i.e. all squares labelled by $1$, $2$, \ldots, $S_{p+1}$, lie in the same orbit.\\
Let us now consider the origamis $O(\alpha_{i},\alpha_{i+1})$ ($i-p$ odd, $1 \leq i \leq 2q-3$).
The first square of $O(\alpha_{i},\alpha_{i+1})$ is labelled by $S_i+1$, the last one by $S_i + s_i + s_{i+1}$. 
Observe that $\sigma_b\sigma_a$ acts in the following way (cf.~\Cref{origami3}):
\[
  \begin{array}{ll}
    \multicolumn{2}{l}{\mbox{Denote } k_i = \frac{\alpha_i-1}{2} \mbox{ and }  k_{i+1} = \frac{\alpha_{i+1}-1}{2}.}\\[1mm]
    S_i &\mapsto   S_i + 2 \mapsto S_i + 1 \mapsto S_i + 3 \mapsto  S_i + 5 \mapsto S_i + 4 \mapsto S_i + 6 \mapsto \ldots \\ 
        &\mapsto S_i + 3k_{i} - 1 \mapsto S_i + 3k_{i} - 2 \mapsto  S_i + 3k_{i} \mapsto S_i + 3k_{i} + 5\\ 
        &\mapsto S_i + 3k_{i} +7  \mapsto S_i + 3k_i+6  \mapsto S_i + 3k_i+8  \mapsto \ldots  \\
        &\mapsto S_i + 3(k_i + k_{i+1})+4 \mapsto S_i + 3(k_i + k_{i+1}) + 3 \mapsto S_i + 3(k_i + k_{i+1}) + 5 \\
        &\mapsto S_i + 3(k_i + k_{i+1}) + 6\\ 
  \end{array}
\]

The remaining squares of $O(\alpha_i,\alpha_{i+1})$ which do not belong to this orbit
are $S_i+3k_i + 1$, $S_i+3k_i + 2$,  $S_i+3k_i + 3$ and   $S_i+3k_i + 4$.
They form two cycles $(S_i+3k_i + 1,  S_i+3k_i + 3)$ and  $(S_i+3k_i + 2,  S_i+3k_i + 4)$
of length two.\\

Similarly, the permutation $\sigma_b\sigma_a$ acts on the squares of the origami $O(\alpha_{p+2q-1},\alpha_{p + 2q};l)$ by:
\[
\begin{array}[t]{ll}
  \multicolumn{2}{l}{\mbox{Denote $i = p+2q-1$.}}\\[1mm]
  S_i & \mapsto   S_i + 2 \mapsto S_i + 1 \mapsto S_i + 3 \mapsto  S_i + 5 \mapsto S_i + 4 \mapsto S_i + 6 \mapsto \ldots \\ 
      &  \mapsto S_i + 3k_{i} - 1 \mapsto S_i + 3k_{i} - 2 \mapsto  S_i + 3k_{i} \mapsto S_i + 3k_{i} + 5\\ 
      &  \mapsto S_i + 3k_{i} +7  \mapsto S_i + 3k_i+6  \mapsto S_i + 3k_i+8  \mapsto \ldots  \\
      &  \mapsto S_i + 3(k_i + k_{i+1})+4 \mapsto S_i + 3(k_i + k_{i+1}) + 3 \mapsto S_i + 3(k_i + k_{i+1}) + 5\\
      &  \mapsto S_i + 3(k_i + k_{i+1}) + 6 \mapsto \ldots \mapsto  S_i + 3(k_i + k_{i+1}) + 5 + l\\[2mm]
  \multicolumn{2}{l}{\mbox{and by two cycles } (S_{i}+3k_{i} + 1,  S_{i}+3k_{i} + 3) \mbox{ and } (S_{i}+3k_{i} + 2,  S_{i}+3k_{i} + 4)}
  \end{array}
\]

Altogether, we obtain for the permutation $\sigma_b\sigma_a$ 
one long cycle containing all squares except the squares $S_i+3k_i + 1$, $S_i+3k_i + 2$,  $S_i+3k_i + 3$ and  $S_i+3k_i + 4$
with $i-p$ odd and $p+1 \leq i \leq p+2q$. This circle has length $L - 4q$. Furthermore, we obtain $2q$ cycles of length 2.
Hence $\sigma_b\sigma_a$ has the form which we claimed.
\end{proof}

We are now able to obtain explicit origamis in each stratum 
whose Veech groups are totally non congruence groups.

\begin{prop}\label{prop:conditions}
  Let $\alpha_1 \ldots, \alpha_p$ be even, $\alpha_{p+1}, \ldots, \alpha_{p+2q}$ be odd numbers.
  Recall that in \Cref{theorigami} we constructed an origami $O$ in 
  $\HHH(\alpha_1, \ldots, \alpha_{p+2q})$ 
  with $L$ squares, where
  \[L = \frac{3}{2}(\alpha_1 + \ldots +\alpha_{p+2q}) + p + 3q+l-1.\]
  Choose $l \in \NN$ such that:
  \begin{enumerate}
  \item[i)]
    $\gcd(L,30q) = 1$.
  \item[ii)]
    $3$ and $5$ do not divide $L-4q$.
  \end{enumerate}
Then the Veech group $\Gamma = \Gamma(O)$ of $O$ is a totally non congruence group. 
\end{prop}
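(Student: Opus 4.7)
The plan is to verify the hypotheses of Theorem~\ref{new-criterion}. Lemma~\ref{lem:parabolics} hands us three parabolic elements of $\Gamma$, and a short computation $STS^{-1}=T'^{-1}$ rewrites them as conjugates of powers of $T$:
\[
T^L \;=\; I\,T^{L}\,I^{-1},\qquad T'^{\,-15} \;=\; S\,T^{15}\,S^{-1},\qquad T''^{\,2(L-4q)} \;=\; T'\,T^{2(L-4q)}\,T'^{-1},
\]
all lying in $\Gamma$. Thus the three pairs
\[
(A,m)\;\in\;\bigl\{(I,L),\ (S,15),\ (T',2(L-4q))\bigr\}
\]
are each admissible for condition B) of Theorem~\ref{new-criterion}. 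Their respective first columns are $e_1,\,S e_1,\,T' e_1 = {1 \choose 0},\,{0 \choose 1},\,{1 \choose 1}$, and any two of these three vectors are non-parallel modulo every prime (one checks each of the three pairings directly). Hence every choice of two of the three candidate pairs automatically satisfies condition A), and the problem reduces to: for each prime $p$, pick two of the three pairs whose $m$-values are both coprime to $p$.

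A short case analysis completes the argument. For $p=2$: since $2 \mid 30q$, hypothesis (i) forces $2 \nmid L$, and $15$ is odd, so I use $(I,L)$ and $(S,15)$. For $p\in\{3,5\}$: (i) gives $p \nmid L$ and (ii) gives $p \nmid L-4q$ (hence $p \nmid 2(L-4q)$), while $p \mid 15$ rules out $(S,15)$; I use $(I,L)$ and $(T',2(L-4q))$. For $p\geq 7$: the pair $(S,15)$ is always available since $p \nmid 15$. If $p \nmid L$, I combine it with $(I,L)$; otherwise $p \mid L$, and then hypothesis (i) gives $p \nmid 30q$, which together with $p \geq 7$ forces $p \nmid q$, so $p \nmid L-4q$, letting me combine $(S,15)$ with $(T',2(L-4q))$. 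Theorem~\ref{new-criterion} then applies in every case and yields that $\Gamma$ is totally non-congruence.

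The only conceptual step is to recognize that conditions~(i) and~(ii) are tailored precisely so that, for every prime $p$, at most one of the three exponents $L$, $15$, $2(L-4q)$ is divisible by $p$; combined with the automatic non-parallelism of $e_1$, $Se_1$, $T'e_1$ modulo $p$, this is exactly the hypothesis Theorem~\ref{new-criterion} demands. There is no serious obstacle in the proof itself: Lemma~\ref{lem:parabolics} does all the geometric work and only elementary arithmetic remains.
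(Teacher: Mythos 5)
Your proposal is correct and takes essentially the same route as the paper: both feed the three parabolics $T^L$, $T'^{15}$, $T''^{2(L-4q)}$ from \Cref{lem:parabolics} into \Cref{new-criterion} and run the same arithmetic case analysis over primes, with hypotheses i) and ii) ensuring that for every prime $p$ at most one of the exponents $L$, $15$, $2(L-4q)$ is divisible by $p$. If anything, your explicit conjugator--exponent pairs $(I,L)$, $(S,15)$, $(T',2(L-4q))$, justified by $T'^{-15}=ST^{15}S^{-1}\in\Gamma$, are more precise than the paper's choice of $A_1,A_2\in\{T,T',T''\}$, whose literal pairing $A_2=T'$, $m_2=15$ would assert $T'T^{15}T'^{-1}=T''^{15}\in\Gamma$ rather than the actually known $T'^{\pm15}\in\Gamma$.
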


\begin{proof}
We know from \Cref{lem:parabolics} that the matrices
\[ T^L, T'^{15} \mbox{ and  }   T''^{2(L-4q)} \mbox{ with }   
   T'' = T'TT'^{-1} =  \bpm 0 & 1\\ -1 & 2 \epm \]
are contained in $\Gamma$. We apply \Cref{new-criterion}. Observe firstly
that each pair $(A_1,A_2)$ of two matrices in $\{T, T',T''\}$
satisfies property A) in  \Cref{new-criterion} for any prime $p$.
We distinguish now three
cases. Suppose as first case that $p$  is neither a divisor
of $L$ nor of $15$.  
Then we choose 
$A_1 = T$, $A_2 = T'$, $m_1 = L$ and $m_2 = 15$. By the assumption on $p$
we have that $p$ does neither divide $m_1$ nor $m_2$.
As second case we consider that $p$ divides $L$. Then we
choose $A_1 = T'$, $A_2 = T''$, $m_1 = 15$ and $m_2 = 2(L-4q)$.
Now, $p$ does not divide $m_1$ by i). Furthermore, it follows from i)
that $p$ does not divide $4q$. Thus since it is is a divisor of $L$,
it does not divide  $m_2 = L-4q$.
In the remaining case, namely $p = 3$ or $p = 5$, we choose    
$A_1 = T$, $A_2 = T''$, $m_1 = L$ and $m_2 = 2(L-4q)$. In this case
$p$ does neither divide $m_1$ (by i)) nor $m_2$ (by ii)).
Hence, in all three cases we obtain that also property B) in  \Cref{new-criterion}
holds. This finishes the proof.
\end{proof}

In particular, \Cref{prop:conditions} defines in each stratum an infinite family of origamis.
    
\begin{thm}\label{thm:in_all_strata}
Every stratum contains an infinite family of origamis
whose Veech groups are totally non-congruence groups.
\end{thm}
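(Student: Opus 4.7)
My plan is to deduce this from \Cref{prop:conditions} by a short number-theoretic argument. Fix a stratum $\HHH(\alpha_1,\ldots,\alpha_k)$. Since the commutator $[\sigma_b^{-1},\sigma_a^{-1}]$ is always an even permutation, the number of odd $\alpha_i$ is even, and after reordering I may assume $\alpha_1,\ldots,\alpha_p$ are even while $\alpha_{p+1},\ldots,\alpha_{p+2q}$ are odd. For each $l\in\NN$, \Cref{theorigami} then produces an origami $O(l)$ in this stratum with $L(l)=\tfrac{3}{2}(\alpha_1+\ldots+\alpha_{p+2q})+p+3q+l-1$ squares; by \Cref{prop:conditions}, its Veech group is a totally non-congruence group whenever $\gcd(L(l),30q)=1$ and $3,5\nmid L(l)-4q$.

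The heart of the argument is that these requirements translate to finitely many congruence conditions on $l$. Writing $L(l)=C+l$ with $C$ a constant depending only on the $\alpha_i$, condition i) forbids the single residue $l\equiv -C$ modulo each prime dividing $30q$, while condition ii) forbids $l\equiv 4q-C$ modulo $3$ and modulo $5$. At every prime $\pi$ in play, at most two residues are forbidden and at least one admissible residue remains (the worst case $\pi=2$ enters only through condition i), where it forbids a single residue out of two). The Chinese Remainder Theorem then packages these local constraints into a single arithmetic progression containing infinitely many positive integers $l$. Since distinct admissible values of $l$ give origamis with pairwise distinct numbers of squares $L(l)$, hence pairwise translation-inequivalent surfaces, the resulting family $\{O(l)\}$ is infinite.

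The only subtlety I anticipate is the edge case $q=0$, in which $30q=0$ makes condition i) ambiguous. There I would replace it by its natural specialization $\gcd(L(l),30)=1$ and verify directly that the case analysis in the proof of \Cref{prop:conditions} still goes through with the parabolics $T^L,\,T'^{15}$ and $T''^{2L}$ supplied by \Cref{lem:parabolics}: whenever a prime $\pi$ divides $L$ one has $\pi\neq 2,3,5$ (by the new condition), so $\pi\nmid 2L$ is automatic and Case~2 of that proof still applies. Either way infinitely many valid $l$ remain, and the conclusion follows.
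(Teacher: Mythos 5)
For $q\ge 1$ your argument is correct, and it reaches the theorem by the same reduction as the paper---everything rests on \Cref{prop:conditions}---but with a genuinely different arithmetic finish. The paper chooses $l$ so that $L$ is a \emph{prime} greater than $4q$ lying in a suitable residue class modulo $3$ and modulo $5$, and invokes Dirichlet's theorem on primes in arithmetic progressions; condition i) then holds because a sufficiently large prime is automatically coprime to $30q$. You instead observe that i) and ii) amount to finitely many congruence conditions on $l$ (at most two forbidden residues modulo $3$ and modulo $5$, one forbidden residue modulo $2$ and modulo each remaining prime divisor of $q$), so the Chinese Remainder Theorem alone produces an infinite arithmetic progression of admissible $l$. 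This is strictly more elementary---Dirichlet is not needed---and your closing remark that distinct admissible $l$ give distinct numbers of squares, hence pairwise inequivalent origamis, makes explicit a point the paper leaves implicit.

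Your treatment of the edge case $q=0$, however, contains a false step. You claim that for a prime $\pi\mid L$ with $\pi\neq 2,3,5$ one has ``$\pi\nmid 2L$ is automatic'': this is wrong, since $\pi\mid L$ implies $\pi\mid 2L$ regardless of whether $\pi=2$. For $q=0$ the parabolics supplied by \Cref{lem:parabolics} are $T^L$, $T'^{15}$ and $T''^{2L}$, so for any prime $\pi$ dividing $L$ the only available exponent coprime to $\pi$ is $15$; that is, only \emph{one} of the three directions is usable, whereas \Cref{new-criterion} requires two independent directions. Case~2 of the proof of \Cref{prop:conditions} therefore does not go through, and your replacement condition $\gcd(L,30)=1$ does not help, since $L>1$ always has some prime divisor. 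You should be aware that you have put your finger on a real gap rather than a removable ambiguity: the paper itself is silent on $q=0$---condition i) there degenerates to $\gcd(L,0)=L=1$, and the step ``$p$ does not divide $4q$'' in the proof of \Cref{prop:conditions} fails because every prime divides $0$---so strata with all $\alpha_i$ even (e.g. $\HHH(2)$, $\HHH(2,2)$) are not covered by the written argument either. Closing this case requires new input, for instance a parabolic element in a fourth direction, or a variant of the construction in \Cref{theorigami} whose cylinder lengths in the direction ${1 \choose 1}$ are not multiples of $L$; it cannot be repaired by reinterpreting condition i).
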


\begin{proof}
The theorem directly follows from \Cref{prop:conditions}.
Namely, we can choose $l$ for example 
such that $L$ is a prime with $L > 4q$ which satisfies the following conditions:
\[
  \begin{array}{lcl}
    L &\equiv& \left\{
          \begin{array}{l} 
            4q+1 \mod 3, \mbox{ if } 3 \mbox{ does not divide } 4q+1\\
            4q+2 \mod 3, \mbox{ elsewise } 
          \end{array} \right . \\[1mm]
    L &\equiv& \left\{
          \begin{array}{l} 
            4q+1 \mod 5, \mbox{ if } 5 \mbox{ does not divide } 4q+1\\
            4q+2 \mod 5, \mbox{ elsewise } 
          \end{array} \right . 
  \end{array} 
\]
By Dirichlet's theorem on arithmetic progressions there are infinitely many
primes which satisfy these conditions. 
\end{proof}

\bibliographystyle{amsalpha}
\bibliography{cg}

\end{document}